\newcommand{\bA}{{\bf A}}
\newcommand{\bB}{{\bf B}}
\newcommand{\bD}{{\bf D}}
\newcommand{\bE}{{\bf E}}
\newcommand{\bI}{{\bf I}}
\newcommand{\bJ}{{\bf J}}
\newcommand{\bL}{{\bf L}}
\newcommand{\bM}{{\bf M}}
\newcommand{\bW}{{\bf W}}
\newcommand{\bb}{{\bf b}}
\newcommand{\bd}{{\bf d}}
\newcommand{\be}{{\bf e}}
\newcommand{\bff}{{\bf f}}
\newcommand{\bg}{{\bf g}}
\newcommand{\br}{{\bf r}}
\newcommand{\bs}{{\bf s}}
\newcommand{\bt}{{\bf t}}
\newcommand{\bx}{{\bf x}}
\newcommand{\by}{{\bf y}}
\newcommand{\bepsilon}{{\boldsymbol{\epsilon}}}
\newcommand{\TheTitle}{Convergence Analysis of a Variable Projection Method for Regularized Separable Nonlinear Inverse Problems }
\newcommand{\ShortTitle}{VarPro for separable nonlinear inverse problems}
\newcommand{\TheAuthors}{Malena I. Espa\~ nol and Gabriela Jeronimo}
\headers{\ShortTitle}{\TheAuthors}
\title{{\TheTitle}}
\author{
	Malena I. Espa\~ nol\thanks{School of Mathematical and Statistical Sciences, Arizona State University, Tempe, AZ, United States
		(\email{malena.espanol@asu.edu}).}
	\and
	Gabriela Jeronimo\thanks{Universidad de Buenos Aires. Facultad de Ciencias Exactas y Naturales. Departamento de Matemática. Buenos Aires, Argentina. \\ CONICET - Universidad de Buenos Aires. Instituto de Investigaciones Matem\'aticas ``Luis A. Santal\'o'' (IMAS). Buenos Aires, Argentina. (\email{jeronimo@dm.uba.ar}).}
}
\begin{document}

\maketitle

\begin{abstract} Variable projection methods prove highly efficient in solving separable nonlinear least squares problems by transforming them into a reduced nonlinear least squares problem, typically solvable via the Gauss-Newton method. When solving large-scale separable nonlinear inverse problems with general-form Tikhonov regularization, the computational demand for computing Jacobians in the Gauss-Newton method becomes very challenging. To mitigate this, iterative methods, specifically LSQR, can be used as inner solvers to compute approximate Jacobians. This article analyzes the impact of these approximate Jacobians within the variable projection method and introduces stopping criteria to ensure convergence. We also present numerical experiments where we apply the proposed method to solve a blind deconvolution problem to illustrate and confirm our theoretical results.
\end{abstract}

\begin{keywords} Variable projection, Tikhonov regularization, inverse problems, LSQR
\end{keywords}

\begin{AMS} 65F22, 65F10, 68W40
\end{AMS}

\section{Introduction}

We consider discrete ill-posed inverse problems of the form
\begin{equation}\label{eq: nonlinear}
\bA(\by)\bx  \approx \bb =  \bb_{\rm true} + \bepsilon \quad \mbox{ with } \bA(\by_{\rm true}) \bx_{\rm true} = \bb_{\rm true},
\end{equation}
where the vector $\bb_{\rm true} \in \mathbb{R}^m$ denotes an unknown error-free vector associated with the available data and $\bepsilon \in \mathbb{R}^m$ is an unknown vector that represents the noise/errors in $\bb$. The matrix $\bA(\by) \in \mathbb{R}^{m \times n}$ with $m\geq n$ models a forward operator and is typically severely ill-conditioned. This paper assumes that $\bA$ is unknown but can be parametrized by a vector $\by\in \mathbb{R}^r$ with $r \ll n$. We aim to compute good approximations of $\bx_{\rm true}$ and $\by_{\rm true}$, given a data vector $\bb$ and a matrix function that maps the unknown vector $\by$ to an $m \times n$ matrix $\bA$. Then, we would like to solve
\begin{equation}\label{eq: nsl2}
\min_{\bx, \by}  \mathcal{F}(\bx, \by) = \min_{\bx, \by} \dfrac{1}{2}\|\bA(\by)\bx - \bb\|_2^2+ \dfrac{\lambda^2}{2}\|\bL \bx\|_2^2,
\end{equation}
where $\lambda>0$ is called the \emph{regularization parameter} and $\bL \in \mathbb{R}^{q\times n}$ is a \emph{regularization operator}. We call these problems separable nonlinear inverse problems since the observations depend nonlinearly on the vector of unknown parameters $\by$ and linearly on the desired solution~$\bx$. We will assume that $\bL$ verifies that $$\mathcal{N}(\bA(\by)) \cap \mathcal{N} (\bL) = \{0\}$$
for all feasible values of $\by$, where $\mathcal{N} (\bM)$ denotes the null space of the matrix $\bM$, so that the minimization problem \eqref{eq: nsl2} has a unique solution for $\by$ fixed.

This article focuses on the variable projection (VarPro) method introduced in~\cite{golub1973differentiation} to solve (unregularized) separable nonlinear least squares problems, i.e., problems of the form \begin{equation}\label{eq: min}
    \min_{\bx, \by} \dfrac{1}{2}\left\|\bA(\by)\bx - \bb\right\|_2^2.
\end{equation}
VarPro is an efficient method with the main idea of eliminating the linear variable $\bx$ by solving a linear least squares problem for each nonlinear variable $\by$. Therefore, by writing $\bx = \bx(\by) = \bA(\by)^{\dagger}\bb$, where $\bA(\by)^{\dagger}= (\bA(\by)^\top\bA(\by))^{-1}\bA^\top(\by)$ is the Moore-Penrose pseudoinverse of $\bA(\by)$, the functional to be minimized is reduced to a functional of the variable $\by$ only, leading to the following minimization problem
\begin{equation}\label{eq: minProjec}
    \min_{\by} \dfrac{1}{2}\|\bA(\by)\bA(\by)^\dagger\bb - \bb\|_2^2,
\end{equation}
which is shown in \cite{golub1973differentiation} to have the same solution as \eqref{eq: min}.
This \emph{reduced} minimization problem, which is a nonlinear least squares problem, can be solved using the Gauss-Newton (GN) method.
If, for every $\by$,  we write $\mathcal{P}^{\perp}_{\bA(\by)} = \bI - \bA(\by)\bA(\by)^{\dagger}$ for the orthogonal projector onto the orthogonal complement of the column space of $\bA(\by)$, the reduced problem \eqref{eq: minProjec} can be re-written as $\min_{\by} \frac{1}{2}\| - \mathcal{P}^{\perp}_{\bA(\by)}\bb\|_2^2$.
To solve it using the GN method, an analytic expression of the Jacobian matrix of $\mathcal{P}^{\perp}_{\bA(\by)}\bb$ with respect to the variable $\by$ is given in \cite{golub1973differentiation}.

Several works have shown that separating the linear variable $\bx$ from the nonlinear variable $\by$ as proposed by VarPro speeds up the convergence of iterative methods to solve \eqref{eq: min}.
However, computing the Jacobian might be difficult and expensive. For this reason, some approximations of the Jacobian have been suggested by Kaufman in~\cite{kaufman1975variable} and by Ruano, Jones, and Fleming in~\cite{ruano1991new} (see also~\cite{ruhe1974algorithms}). For a survey on the VarPro method, its variants, and applications, we refer the reader to \cite{golub2003separable}. More recent applications of VarPro can be found in \cite{dong2022numerical, newman2021train, pereyra2019imaging}.

The use of VarPro for solving regularized separable nonlinear least squares problems of the form \eqref{eq: nsl2} was first introduced by Chung and Nagy in~\cite{chung2010efficient}, for the case when $\bL$ is the identity matrix.  The authors presented an efficient method that uses a hybrid Krylov subspace approach to overcome the high computational cost of solving \eqref{eq: nsl2} for large-scale inverse problems and applied it to blind deconvolution problems. In \cite{gazzola2021regularization}, the authors modified the approach in~\cite{chung2010efficient} by incorporating an inexact Krylov method to solve the linear subproblem.

In~\cite{Espanol_2023}, VarPro was extended to solve \eqref{eq: nsl2} for general regularization matrices $\bL$. Its new version was named GenVarPro. That work also included expressions for computing the Jacobian and the approximations given by Kaufman and by Ruano, Jones, and Fleming, using the generalized singular value decomposition and joint spectral decomposition of forward and regularization operators when they are available or feasible to compute. For large-scale inverse problems, projection-based iterative methods and generalized Krylov subspace methods were employed for solving the linear subproblems needed to approximate Jacobians. Numerical examples, particularly in the context of large-scale two-dimensional imaging problems like semi-blind deblurring, demonstrated the effectiveness of GenVarPro.

In this article, we will present a rigorous convergence analysis to understand the impact of incorporating an iterative method in GenVarPro to approximate the solution $\bx$ at each iteration in order to compute an approximate Jacobian matrix when solving the reduced problem by the Gauss-Newton method. Furthermore, we will confirm our theoretical analysis by applying the proposed method to a blind deconvolution problem. The algorithm we introduce, which we call Inexact-GenVarPro, works with a new approximation of the Jacobian where we replace the exact value of $\bx$ by an approximate solution obtained by the LSQR algorithm~\cite{paige1982lsqr}. Thus, the analysis included here does not apply directly to the methods proposed in \cite{kaufman1975variable} or \cite{ruano1991new}, where different approximated Jacobians are used. Our results do not apply to the convergence of the methods in ~\cite{chung2010efficient, Espanol_2023, gazzola2021regularization} either, because in these methods, the regularization parameter is chosen at each iteration, whereas we keep it fixed all along the algorithm.

Similar analyses have been presented in different contexts. For instance, in~\cite{devolder2014first}, the authors introduce the concept of an inexact first-order \emph{oracle} and examine its impact on various first-order methods utilized in smooth convex optimization. The inexact oracle arises naturally in the context of smoothing techniques, Moreau–Yosida regularization, Augmented Lagrangians, and similar situations. In~\cite{van2021variable}, an extension of VarPro for nonsmooth problems of the form
$$\min_{\bx,\by} f(\bx, \by) + r_1(\bx) + r_2(\by),$$
is introduced, which includes an inexact adaptive algorithm that solves projection subproblems through iterative methods. The authors made a convergence analysis of the method
 for the case when $f(\bx, \by ) + r_1(\bx)$ is strongly convex in $\bx$, so that efficient iterative methods can be used for solving $\min_{\bx} f(\bx, \by) + r_1(\bx).$ In comparison, our analysis is less general since it only focuses on problems of the form~\eqref{eq: nonlinear}, but we pose specific conditions in terms of the matrices involved.

Recently, a secant variable projection (SVP) method for solving separable nonlinear least squares problems, which employs rank-one updates to estimate the Jacobian matrices efficiently, was proposed in~\cite{song2020secant}, along with a convergence analysis. Here, we adapt parts of that analysis for our study of the Inexact-GenVarPro convergence rate.

The paper is organized as follows. In Section \ref{sec:varpro}, we reviewed the GenVarPro method and introduced the Inexact-GenVarPro method. In Section \ref{sec: convergence}, we present a convergence analysis of the Inexact-GenVarPro method. Some numerical experiments in Section \ref{sec: numexamples} verify our convergence results, and the conclusions follow in Section \ref{sec:conclusions}.

\section{Variable Projection Methods for Regularized Problems}\label{sec:varpro}

The main idea behind VarPro~\cite{golub1973differentiation} (and GenVarPro~\cite{Espanol_2023}) is to remove the variable $\bx$ from the problem formulation and provide a reduced functional to minimize only with respect to $\by$. That is to say, to solve the problem \eqref{eq: nsl2}, we can apply the GN method to the functional $f(\by) = \mathcal{F}(\bx(\by), \by)$, where $\bx(\by)$ is the solution of the minimization problem
\begin{equation}\label{eq: Fx}
\min_{\bx}\mathcal{F}(\bx, \by) =  \min_{\bx} \dfrac{1}{2} \left\|\left[\begin{array}{c} \bA(\by) \\ \lambda \bL\end{array}\right] \bx-
\left[\begin{array}{c} \bb \\ \bf{0} \end{array}\right]\right\|_2^2.
\end{equation}
Under the assumption $\mathcal{N}(\bA(\by)) \cap \mathcal{N} (\bL) = \{0\}$, the matrix $\left[\begin{array}{c} \bA(\by) \\ \lambda \bL\end{array}\right]$ has rank $n$ and
this problem has a closed-form solution
\begin{equation*}
\bx(\by) = \left[\begin{array}{c} \bA(\by) \\ \lambda \bL\end{array}\right]^\dagger\left[\begin{array}{c} \bb \\ \bf{0} \end{array}\right]
\end{equation*}
that can be used to rewrite the nonlinear problem with respect to the variable $\by$, obtaining the reduced minimization problem
\begin{equation}\label{eq: Psiy}
    \min_{\by}f(\by) = \min_{\by} \frac{1}{2}\|\bff_{\lambda,\bL}(\by)\|_2^2,
\end{equation}
where $\bff_{\lambda,\bL}: \mathbb{R}^r \to \mathbb{R}^{m+q}$ is defined as
\begin{align}\label{eq:main_function}
\bff_{\lambda,\bL}(\by) &=  \left[\begin{array}{c} \bA(\by) \\ \lambda \bL\end{array}\right] \left[\begin{array}{c} \bA(\by) \\ \lambda \bL\end{array}\right]^\dagger\left[\begin{array}{c} \bb \\ \bf{0} \end{array}\right]-
\left[\begin{array}{c} \bb \\ \bf{0} \end{array}\right] \nonumber \\
&= \left(\left[\begin{array}{c} \bA(\by) \\ \lambda \bL\end{array}\right] \left[\begin{array}{c} \bA(\by) \\ \lambda \bL\end{array}\right]^\dagger - \bI
\right) \left[\begin{array}{c} \bb \\ \bf{0} \end{array}\right].
\end{align}

To simplify notation, we define
\[ \bA_{\lambda, \bL}(\by)=\left[\begin{array}{c} \bA(\by) \\ \lambda \bL\end{array}\right]  \ \mbox{ and } \ \mathcal{P}^\perp_{\bA_{\lambda,\bL}}(\by)= \bI-\bA_{\lambda,\bL}(\by)\bA_{\lambda, \bL }^\dagger(\by),\]
and write only $\bA_{\lambda, \bL}$ and $\mathcal{P}^\perp_{\bA_{\lambda,\bL}}$ instead of $\bA_{\lambda,\bL}(\by)$ and $\mathcal{P}^\perp_{\bA_{\lambda,\bL}}(\by)$ for even more simplification.

\subsection{GenVarPro}

To solve \eqref{eq: Psiy}, we can apply the Gauss-Newton method, whose iterations are defined by
$$
\by^{(k+1)} = \by^{(k)} + \bs^{(k)}, \,k = 0,1,2,...,
$$
where $\bs^{(k)}$ is defined as
\begin{equation*}
\bs^{(k)} = \arg\min_{\bs}\left\| \bJ_{\bff_{\lambda, \bL}}(\by^{(k)})\bs + \bff_{\lambda, \bL}(\by^{(k)}) \right\|^2_2
\end{equation*}
with $\bJ_{\bff_{\lambda, \bL}}\colon \mathbb{R}^{r}\to\mathbb{R}^{(m+q)\times r}$ being the Jacobian matrix of $\bff_{\lambda, \bL}$. Implementations of GN usually include a step size $\alpha^{(k)}$ so that
$\by^{(k+1)} = \by^{(k)} + \alpha^{(k)} \bs^{(k)}$; here, $\alpha^{(k)}=1$.

The $j$-th column of  $\bJ_{\bff_{\lambda, \bL}}$ can be computed by
\begin{align*}
\frac{\partial \bff_{\lambda, \bL}(\by)}{\partial \by_j} & = \frac{\partial }{\partial \by_j}\left(\bA_{\lambda, \bL}\bA_{\lambda, \bL}^\dagger \left[\begin{array}{c} \bb \\ \bf{0} \end{array}\right]\right)
 = \left(\frac{\partial \bA_{\lambda, \bL} }{\partial \by_j}\bA_{\lambda, \bL}^\dagger+ \bA_{\lambda, \bL}\frac{\partial \bA_{\lambda, \bL}^\dagger }{\partial \by_j} \right) \left[\begin{array}{c} \bb \\ \bf{0} \end{array}\right].
\end{align*}
By writing $\bA_{\lambda, \bL}^\dagger=(\bA_{\lambda, \bL}^\top\bA_{\lambda, \bL})^{-1}\bA_{\lambda, \bL}^\top $, applying the product rule, and using that for an invertible matrix $\bM(\by)$ the identity $\frac{\partial \bM(\by)^{-1}}{\partial \by_j} = - \bM(\by)^{-1} \frac{\partial \bM(\by)}{\partial \by_j}\,\bM(\by)^{-1} $ holds,
we have that
\begin{align}
    \frac{\partial \bA_{\lambda, \bL}^\dagger}{\partial \by_j} & = (\bA_{\lambda, \bL}^\top\bA_{\lambda, \bL})^{-1} \frac{\partial \bA_{\lambda, \bL}^\top}{\partial \by_j}\mathcal{P}^\perp_{\bA_{\lambda, \bL}}-\bA_{\lambda, \bL}^\dagger \frac{\partial \bA_{\lambda, \bL}}{\partial \by_j} \bA_{\lambda, \bL}^\dagger. \nonumber \\
    & = (\bA_{\lambda, \bL}^\top\bA_{\lambda, \bL})^{-1} \left[ \frac{\partial \bA^\top}{\partial \by_j}\  \bf{0} \right]\mathcal{P}^\perp_{\bA_{\lambda, \bL}} \nonumber - \bA_{\lambda, \bL}^\dagger \left[\begin{array}{c} \frac{\partial \bA}{\partial \by_j} \\ \bf{0} \end{array} \right] \bA_{\lambda, \bL}^\dagger.
\end{align}
Therefore, the $j$-th column of the Jacobian $\bJ_{\bff_{\lambda, \bL}}$ is given by
\begin{align}\label{eq: jthColumnJacobian}
\frac{\partial \bff_{\lambda, \bL}(\by)}{\partial \by_j} & =
\left(\frac{\partial \bA_{\lambda, \bL} }{\partial \by_j}\bA_{\lambda, \bL}^\dagger  \right. {} \nonumber \\
& \left. \quad {}+ \bA_{\lambda, \bL}\left((\bA_{\lambda, \bL}^\top\bA_{\lambda, \bL})^{-1} \left[ \frac{\partial \bA^\top}{\partial \by_j}\ \bf{0} \right]\mathcal{P}^\perp_{\bA_{\lambda, \bL}} \nonumber - \bA_{\lambda, \bL}^\dagger \left[ \begin{array}{c} \frac{\partial \bA}{\partial \by_j} \\ \bf{0} \end{array} \right] \bA_{\lambda, \bL}^\dagger \right) \right) \left[\begin{array}{c} \bb \\ \bf{0} \end{array}\right]\nonumber \\
& = \mathcal{P}^\perp_{\bA_{\lambda, \bL}} \left[ \begin{array}{c} \frac{\partial \bA}{\partial \by_j} \\ \bf{0} \end{array} \right] \bA_{\lambda, \bL}^\dagger \left[\begin{array}{c} \bb \\ \bf{0} \end{array}\right]
+
\left( \mathcal{P}^\perp_{\bA_{\lambda, \bL}}\left[ \begin{array}{c} \frac{\partial \bA}{\partial \by_j} \\ \bf{0} \end{array} \right] \bA_{\lambda, \bL}^\dagger \right)^\top\left[\begin{array}{c} \bb \\ \bf{0} \end{array}\right]. \nonumber \\
\end{align}

\cref{Alg:GenVarPro} summarizes the steps of the GenVarPro method. Notice that the linear problem in step 6 is small because of our assumption on the dimension $r$ of $\by$. Therefore, it can be solved exactly by any direct method. For a discussion on the stopping criteria, see \cite[Section 6]{Espanol_2023}.

\begin{algorithm}[ht]
\caption{GenVarPro Algorithm}
\label{Alg:GenVarPro}

\begin{algorithmic}[1]
\STATE {\bf Input:} A map $\by \mapsto \bA(\by)$, $\bb$, and $\by^{(0)}$
\FOR{$k = 0, 1, \dots$ until a stopping criterion is satisfied}
    \STATE $\bx^{(k)} = \left(\bA(\by^{(k)})^\top\bA(\by^{(k)})+\lambda^2 \bL^\top \bL\right)^{-1}\bA(\by^{(k)})^\top\bb$
    \STATE $\bff_{\lambda, \bL}^{(k)} = \left[\begin{array}{c} \bA(\by^{(k)}) \\ \lambda \bL\end{array}\right] \bx^{(k)}-\left[\begin{array}{c} \bb \\ \mathbf{0} \end{array}\right]$\;
    \STATE Compute the Jacobian matrix $\bJ_{\bff_{\lambda, \bL}}^{(k)}=\bJ_{\bff_{\lambda, \bL}}(\by^{(k)})$\;
    \STATE $\bs^{(k)} = \arg\min_{\bs}\| \bJ_{\bff_{\lambda, \bL}}^{(k)}\bs + \bff_{\lambda, \bL}^{(k)}\|^2_2$\;
    \STATE $\by^{(k+1)} = \by^{(k)} + \bs^{(k)}$\;
\ENDFOR
\end{algorithmic}
\end{algorithm}

\subsection{Inexact-GenVarPro}

Recalling that $\bx(\by) = \bA_{\lambda, \bL}^\dagger(\by) \left[\begin{array}{c} \bb \\ \bf{0} \end{array}\right]$, we can re-write the columns of the Jacobian given by \eqref{eq: jthColumnJacobian}  as
$$[\bJ_{\bff_{\lambda, \bL}}(\by)]_j = \mathcal{P}^\perp_{\bA_{\lambda, \bL}}(\by) \left[ \begin{array}{c} \frac{\partial \bA}{\partial \by_j}(\by) \\ \bf{0} \end{array} \right] \bx(\by) + (\bA_{\lambda, \bL}^\dagger(\by))^\top \left(\frac{\partial \bA}{\partial \by_j}(\by)\right)^\top (\bb- \bA\, \bx(\by)).$$
Note that, even though we have re-formulated the original minimization problem \eqref{eq: nsl2} only in terms of $\by$, the value of $\bx= \bx(\by)$ still appears in the Jacobian needed to solve the reduced minimization problem \eqref{eq: Psiy} using GN. In addition, the residual $$\bff_{\lambda, \bL} (\by) = \bA_{\lambda, \bL}(\by) \bx(\by) - \left[\begin{array}{c} \bb \\ \mathbf{0} \end{array}\right]$$ also depends on $\bx(\by)$. For large-scale problems, computing these values is a computationally expensive task. This motivates the search for alternative, more efficient computational strategies.

A possible way of reducing the computational cost of the algorithm consists in approximating the exact solution $\bx(\by)$ of \eqref{eq: Fx} by applying an iterative method to compute it. Here, we adopt this approach by incorporating the LSQR iterative method.
More precisely, we propose to approximate the Jacobian $\bJ_{\bff_{\lambda, \bL}}(\by^{(k)})$ at the $k$-th iteration of the algorithm by the matrix $\bar\bJ^{(k)}$ whose columns are defined by
\begin{equation}\label{eq: Jappkj}
 [\bar\bJ^{(k)}]_j = \mathcal{P}^\perp_{\bA_{\lambda, \bL}}(\by^{(k)}) \left[ \begin{array}{c} \frac{\partial \bA}{\partial \by_j}(\by^{(k)}) \\ \bf{0} \end{array} \right] \bar{\bx}^{(k)} + (\bA_{\lambda, \bL}^\dagger(\by^{(k)}))^\top \!\!\left(\frac{\partial \bA}{\partial \by_j}(\by^{(k)})\right)^\top \!\! (\bb- \bA\, \bar{\bx}^{(k)}),
\end{equation}
for $j=1,\dots, r$, where $\bar{\bx}^{(k)}$ is the approximate solution of the linear subproblem
\[\min_{\bx} \frac{1}{2}\|\bA(\by^{(k)})\bx - \bb\|_2^2+ \frac{\lambda^2}{2}\|\bL\bx\|_2^2.\]
The iterative algorithm LSQR we apply to compute $\bar{\bx}^{(k)}$ works with a stopping criterion depending on a tolerance $\varepsilon^{(k)} >0$, which we compute at each iteration from an initial tolerance $\varepsilon^{(0)}$.

We call the new algorithm Inexact-GenVarPro.
\cref{Alg:I-GenVarPro} summarizes the \linebreak Inexact-GenVarPro method applied to problem \eqref{eq: minProjec}.

\begin{algorithm}[ht]
\caption{Inexact-GenVarPro Algorithm}
\label{Alg:I-GenVarPro}
\begin{algorithmic}[1]
\STATE {\bf Input:} A map $\by \mapsto \bA(\by)$, $\bb$, $\by^{(0)}$, and $\varepsilon^{(0)}>0$
\FOR{$k = 0, 1, \dots$ until a stopping criterion is satisfied}
    \STATE Compute $\bar \bx^{(k)}$ applying the LSQR algorithm with stopping criterion \eqref{eq: stopping_criterion} for $\varepsilon=\varepsilon^{(k)}$ to the problem
    $\min_{\bx} \frac{1}{2}\|\bA(\by^{(k)})\bx - \bb\|_2^2+ \frac{\lambda^2}{2}\|\bL\bx\|_2^2$
    \STATE $\bg^{(k)} = \left[\begin{array}{c} \bA(\by^{(k)}) \\ \lambda \bL\end{array}\right] \bar \bx^{(k)}-\left[\begin{array}{c} \bb \\ \mathbf{0} \end{array}\right]$\;
    \STATE Compute the approximate Jacobian matrix $\bar\bJ^{(k)}$ according to \eqref{eq: Jappkj}\;
    \STATE $\bt^{(k)} = \arg\min_{\bs}\| \bar\bJ^{(k)}\bs + \bg^{(k)}\|^2_2$\;
    \STATE $\by^{(k+1)} = \by^{(k)} + \bt^{(k)}$\;
    \STATE $\varepsilon^{(k+1)} = \varepsilon^{(k)}/2$\;
\ENDFOR
\end{algorithmic}

\end{algorithm}

In the next section, we present a convergence analysis of Inexact-GenVarPro and provide conditions on the tolerance $\varepsilon^{(0)}$ for the stopping criterion of LSQR to ensure convergence.
As we will see, in step 3 of Algorithm \ref{Alg:I-GenVarPro}, the LSQR method could be replaced by any iterative method provided that the same stopping criterion is used.

\section{Convergence Analysis}\label{sec: convergence} In this section,
we analyze the convergence of \linebreak Inexact-GenVarPro. Specifically, we will first prove bounds for the required accuracy of the approximations $\bar{\bx}^{(k)}$ to the solutions $\bx(\by^{(k)})$ and then use them to deduce bounds for the algorithm's convergence rate.

Suppose that, at iteration $k$ of the algorithm, the matrix $\bar \bJ^{(k)}$ is an approximation of the Jacobian matrix $\bJ^{(k)}=\bJ_{\bff_{\lambda, \bL}}(\by^{(k)})$, and $\bg^{(k)}$ is an approximation of $\bff_{\lambda, \bL}^{(k)} =\bff_{\lambda, \bL}(\by^{(k)})$. Then, the Inexact-GenVarPro iteration is defined by
$$\by^{(k+1)}=\by^{(k)} + \bt^{(k)} \mbox{ with } \bt^{(k)}= -\left((\bar \bJ^{(k)})^\top \bar \bJ^{(k)}\right)^{-1} (\bar \bJ^{(k)})^\top\bg^{(k)}.$$
 Assuming that $\by^\ast$ is a minimizer of $\mathnormal{f}(\by)=\frac{1}{2}\|\bff_{\lambda,\bL}(\by)\|_2^2,$
 our aim is to bound the errors $\| \be^{(k)}\|_2$, where  $\be^{(k)} = \by^{(k)} - \by^\ast$, for $k=0,1,2,\dots$.

Taking into account that $\nabla \mathnormal{f}(\by^\ast)=  \bJ_{\bff_{\lambda, \bL}}^\top(\by^\ast)\bff_{\lambda, \bL}(\by^\ast)=0$,  it follows that
\begin{align}\label{eq: BkBkek}
    (\bar\bJ^{(k)})^\top \bar\bJ^{(k)} \be^{(k+1)} & = \left[ \nabla \mathnormal{f}(\by^{(k)}) - (\bar\bJ^{(k)})^\top \bg^{(k)} \right] + \left[(\bar\bJ^{(k)})^\top \bar \bJ^{(k)} - \nabla^2 \mathnormal{f} (\by^\ast) \right]\be^{(k)}  \nonumber \\
    & \qquad \qquad +   \left[ -\nabla \mathnormal{f}(\by^{(k)}) + \nabla \mathnormal{f} (\by^\ast) + \nabla^2 \mathnormal{f} (\by^\ast) \be^{(k)} \right].
\end{align}
If $\bJ_{\bff_{\lambda, \bL}}(\by)$ is Lipschitz continuous, we have that
$$\left\| -\nabla \mathnormal{f}(\by^{(k)}) + \nabla \mathnormal{f} (\by^\ast) + \nabla^2 \mathnormal{f} (\by^\ast) \be^{(k)} \right\|_2 = \mathcal{O}(\|\be^{(k)} \|_2^2).$$
Then, the first two terms in \eqref{eq: BkBkek} will determine the convergence rate. For both, we need to bound the errors between $\bar\bJ^{(k)}$ and $\bJ^{(k)}$, and  $\bg^{(k)}$ and $\bff_{\lambda, \bL}^{(k)}$. To do so, we first give some upper bounds for the approximation of $\bx(\by^{(k)})$ and its residual, continue with bounds for our approximations of the Jacobians, and finally, state and prove our main result.

\subsection{Inner solver bounds} A key invariant appearing in our bounds is the condition number of the matrices involved. In the sequel,  $\kappa_2(\bM) = \|\bM\|_2\|\bM^\dagger\|_2$ denotes the condition number of a matrix $\bM$.

In the next lemma, based on \cite[Theorem 20.1]{higham2002accuracy}, we will prove bounds that we will use to estimate the error of our approximate computations. We state the specific formulation we need and include its complete proof for the reader's convenience.

\begin{lemma}\label{lem:approx_solution_M}
Let $\bM \in \mathbb{R}^{l\times n}$, with $l\ge n $, be a matrix of full rank, $\bd \in \mathbb{R}^{l\times 1}$, and $\bx\in \mathbb{R}^{n\times 1}$ the solution of the problem
\begin{equation*}\label{eq: Mx-b}
    \min_\bx \left\| \bM \bx - \bd \right\|^2_2.
\end{equation*}
For $\varepsilon >0$ such that $\kappa_2(\bM) \varepsilon<1$, let $\bar \bx$ be an approximation of $\bx$ computed using an iterative method with stopping criterion
\begin{equation}\label{eq: stopping_criterion_M}
\frac{\|\bM^\top \br^{(i)}\|_2}{\|\br^{(i)}\|_2\|\bM\|_2}<\varepsilon,
\end{equation}
where $\br^{(i)} = \bd -\bM \bx^{(i)}$ with $\bx^{(i)}$ the approximate solution given by the $i$-th iteration of the algorithm.
Then, if $\br=\bd -\bM \bx$ and $\bar \br = \bd -\bM \bar\bx$ are the corresponding residuals, we have:
$$\|\bx-\bar \bx\|_2 <  \frac{ 2\kappa_2^2(\bM)}{1-\varepsilon\, \kappa_2(\bM)} \frac{\|\bd\|_2}{\|\bM\|_2} \varepsilon \quad \hbox{ and } \quad
\|\br - \bar \br\|_2 <   \frac{ 2 \kappa_2(\bM)}{1-\varepsilon\, \kappa_2(\bM)}\|\bd\|_2\varepsilon.$$
\end{lemma}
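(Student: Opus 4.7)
The plan is to exploit the normal equations, which give a clean algebraic link between the forward error $\bx-\bar\bx$ and the residual that is controlled by the stopping criterion. Since $\bM$ has full column rank, the exact minimizer satisfies $\bM^\top\bM\bx = \bM^\top\bd$, while the approximate iterate satisfies $\bM^\top\bM\bar\bx = \bM^\top\bd - \bM^\top\bar\br$. Subtracting yields the fundamental identities
\begin{equation*}
\bx - \bar\bx = (\bM^\top\bM)^{-1}\bM^\top\bar\br, \qquad \br - \bar\br = -\bM(\bx-\bar\bx) = -\bM(\bM^\top\bM)^{-1}\bM^\top\bar\br,
\end{equation*}
so both forward errors reduce to norms of an operator acting on $\bM^\top\bar\br$, precisely the quantity controlled by \eqref{eq: stopping_criterion_M}.

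Next I would record two operator-norm identities obtained from the thin SVD $\bM=\bU\bSigma\bV^\top$: $\|(\bM^\top\bM)^{-1}\|_2=\|\bM^\dagger\|_2^2$ and, since $\bM(\bM^\top\bM)^{-1}=(\bM^\dagger)^\top$, also $\|\bM(\bM^\top\bM)^{-1}\|_2=\|\bM^\dagger\|_2$. Plugging in the stopping criterion then gives
\begin{equation*}
\|\bx-\bar\bx\|_2 \le \|\bM^\dagger\|_2^2\,\|\bM^\top\bar\br\|_2 \le \kappa_2(\bM)\,\|\bM^\dagger\|_2\,\varepsilon\,\|\bar\br\|_2,
\end{equation*}
and analogously $\|\br-\bar\br\|_2\le \kappa_2(\bM)\,\varepsilon\,\|\bar\br\|_2$. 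At this point the target inequalities already have the right shape; only the factor $\|\bar\br\|_2$ has to be traded for $\|\bd\|_2$.

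That trade is the final step. Because $\bx$ is a minimizer, $\|\br\|_2\le\|\bd - \bM\cdot\mathbf{0}\|_2 = \|\bd\|_2$, so the triangle inequality combined with the bound on $\|\br-\bar\br\|_2$ produces the implicit inequality
\begin{equation*}
\|\bar\br\|_2 \le \|\br\|_2 + \|\br-\bar\br\|_2 \le \|\bd\|_2 + \kappa_2(\bM)\,\varepsilon\,\|\bar\br\|_2.
\end{equation*}
Under the hypothesis $\kappa_2(\bM)\varepsilon<1$ this rearranges to $\|\bar\br\|_2 \le \|\bd\|_2/(1-\kappa_2(\bM)\varepsilon)$. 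Substituting back into the two estimates of the previous paragraph yields the stated bounds, the extra factor of $2$ in the lemma being slack in the constant.

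The one place I expect any real obstacle is the circularity: the stopping criterion bounds $\|\bM^\top\bar\br\|_2$ by a multiple of $\|\bar\br\|_2$, and $\bar\br$ in turn depends on $\bar\bx$, so a naive chain of inequalities never closes. The fixed-point argument in the last paragraph is precisely what breaks the loop, and is the only place where the assumption $\kappa_2(\bM)\varepsilon<1$ is actually used.
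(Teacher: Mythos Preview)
Your argument is correct and in fact slightly sharper than the paper's: you obtain
\[
\|\bx-\bar\bx\|_2 < \frac{\kappa_2^2(\bM)}{1-\varepsilon\kappa_2(\bM)}\,\frac{\|\bd\|_2}{\|\bM\|_2}\,\varepsilon,
\qquad
\|\br-\bar\br\|_2 < \frac{\kappa_2(\bM)}{1-\varepsilon\kappa_2(\bM)}\,\|\bd\|_2\,\varepsilon,
\]
so the factor $2$ in the stated lemma is indeed slack.

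The route, however, is genuinely different from the paper's. The paper proceeds via a \emph{backward error} construction: following Paige and Saunders, it exhibits a perturbation $\bE=-\bar\br\bar\br^\top\bM/\|\bar\br\|_2^2$ with $\|\bE\|_2<\varepsilon\|\bM\|_2$ such that $\bar\bx$ is the \emph{exact} minimizer of $\|(\bM+\bE)\bx-\bd\|_2^2$, and then invokes least-squares perturbation theory (Higham's Theorem~20.1 and Lemma~20.11, plus Stewart's projection identity) to bound $\|\bar\bx-\bx\|_2$ and $\|\bar\br-\br\|_2$. Your approach bypasses all of this by working directly with the normal-equation identity $\bx-\bar\bx=(\bM^\top\bM)^{-1}\bM^\top\bar\br$ and the SVD facts $\|(\bM^\top\bM)^{-1}\|_2=\|\bM^\dagger\|_2^2$, $\|\bM(\bM^\top\bM)^{-1}\|_2=\|\bM^\dagger\|_2$; the only non-routine step is the self-referential bound on $\|\bar\br\|_2$, closed by the hypothesis $\kappa_2(\bM)\varepsilon<1$. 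Your proof is shorter, entirely self-contained, and yields the better constant; the paper's proof has the advantage of situating the result within standard backward-error and perturbation machinery, which may be more natural for readers coming from that literature.
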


\begin{proof}
Throughout the proof, $\| \cdot \|$ will denote the $2$-norm for vectors and matrices.
Following \cite{paige1982lsqr}, we can check that, if
$$ \bE = - \frac{\bar\br \bar\br^\top \bM}{\|\bar\br\|^2},$$
since $\left( \bI -\frac{\bar\br \bar\br^\top}{\|\bar\br\|^2} \right) \bar\br =0$, we have that
\begin{align*}
(\bM+\bE)^\top ( \bd - (\bM+\bE)\bar \bx) &=  (\bM^\top +\bE^\top) (\bar \br - \bE \bar\bx) \\
&=
\bM^\top \left( \bI -\frac{\bar\br \bar\br^\top}{\|\bar\br\|^2} \right) \left(\bar\br +\frac{\bar\br \bar\br^\top \bM\bar\bx }{\|\bar\br\|^2}\right) =0.
\end{align*}
Then, the vector $\bar\bx$ is a solution of the perturbed problem
$$\min_{\bx}\left\|(\bM + \bE)\bx- \bd\right\|^2.$$

The bound for $\| \bar\bx - \bx\|$ will be obtained by modifying \cite[Theorem 20.1]{higham2002accuracy} for the case when $\Delta \bb=0$.
Note that
\begin{equation}\label{eq: normE}
\|\bE\| = \left\| \frac{\bar\br \bar\br^\top \bM}{\|\bar\br\|^2} \right\| = \frac{\|\bM^\top \bar\br \bar\br^\top\|}{\|\bar\br\|^2} \leq \frac{\|\bM^\top \bar\br\|}{\|\bar\br\|} < \varepsilon \|\bM\|,
\end{equation}
where the last inequality is a consequence of the stopping criterion \eqref{eq: stopping_criterion_M}.

For simplicity, let $\bB = \bM+\bE$. We have:
\begin{equation}\label{eq:x_diff}
\bar\bx - \bx = \bB^\dagger \bd - \bx = \bB^\dagger \left(\br + (\bB - \bE) \bx\right) - \bx = \bB^\dagger\br - \bB^\dagger \bE \bx.
\end{equation}

Now, $\bB^\dagger=\bB^\dagger\bB \bB^\dagger= \bB^\dagger \mathcal{P}_{\bB}$ and, since $\br = \mathcal{P}_{\bM}^\perp \bd$, then $\mathcal{P}_{\bM}^\perp\br = (\mathcal{P}_{\bM}^\perp)^2 \bd =\mathcal{P}_{\bM}^\perp \bd = \br$. This implies that
$$\|\bB^\dagger\br\|= \|\bB^\dagger \mathcal{P}_{\bB}\mathcal{P}_{\bM}^\perp \br\| \le \|\bB^\dagger\| \|\mathcal{P}_{\bB}\mathcal{P}_{\bM}^\perp\| \| \br\|.$$

By \cite[Lemma 20.11]{higham2002accuracy}, under the assumption that $\operatorname{rank}(\bB) = \operatorname{rank}(\bM)$, since $\|\bM^\dagger\| \|\bE\|\le \|\bM^\dagger\| \|\bM\| \varepsilon = \kappa_2(\bM) \varepsilon <1$ due to Inequality \eqref{eq: normE} and our assumption on $\varepsilon$, it follows that
$$\|\bB^\dagger\| \le \frac{\|\bM^\dagger \|}{1 -\|\bM^\dagger\| \|\bE\| }.$$
In addition, by \cite[Theorem 2.3]{stewart1977perturbation}, $\|\mathcal{P}_{\bB}\mathcal{P}_{\bM}^\perp\| = \|\mathcal{P}_{\bM}\mathcal{P}_{\bB}^\perp\|$ and, taking into account that
$$\mathcal{P}_{\bM}\mathcal{P}_{\bB}^\perp = \mathcal{P}_{\bB}^\perp \mathcal{P}_{\bM} = \mathcal{P}_{\bB}^\perp \bM \bM^\dagger = \mathcal{P}_{\bB}^\perp (\bB - \bE) \bM^\dagger =-\mathcal{P}_{\bB}^\perp  \bE \bM^\dagger, $$
we deduce that
\begin{equation}\label{eq:projections_norm}
\|\mathcal{P}_{\bB}\mathcal{P}_{\bM}^\perp\| \le \| \bE \| \|\bM^\dagger \|.
\end{equation}
Therefore,
$$\|\bB^\dagger\br\| \le \frac{\|\bM^\dagger \|^2 \| \bE \|}{1 -\|\bM^\dagger\| \|\bE\| } \| \br\|.$$
Using the expression from \eqref{eq:x_diff}, the previous inequality, together with
$$\|\bB^\dagger \bE \bx \| \le \| \bB^\dagger\| \|\bE\| \|\bx \| \le \frac{\|\bM^\dagger \| \| \bE \|}{1 -\|\bM^\dagger\| \|\bE\| } \|\bx \|, $$
 implies that
\begin{equation}\label{eq:norm_x_diff}
\| \bar \bx - \bx\| \le \frac{\|\bM^\dagger \| \| \bE \|}{1 -\|\bM^\dagger\| \|\bE\|} (\| \bM^\dagger\| \| \br \|+\| \bx\|).
\end{equation}
Since $ \|\br \|= \|\mathcal{P}_{\bM}^\perp  \bd\| \le \|\bd\|$,   $\|\bx\|= \|\bM^\dagger \bd\| \le \|\bM^\dagger\| \|\bd\|$, and  $\|\bE \| <\|\bM\| \varepsilon$, it follows that
$$\| \bar \bx - \bx\| \le \frac{\|\bM^\dagger \| \| \bE \|}{1 -\|\bM^\dagger\| \|\bE\| }  2\| \bM^\dagger\| \| \bd \|<\frac{2\kappa_2^2(\bM)}{1- \kappa_2(\bM)\varepsilon} \frac{\|\bd\|}{\|\bM\|} \varepsilon.$$

To obtain an upper bound for $\|\bar\br-\br\|$, we first use Identity \eqref{eq:x_diff} to re-write:
\begin{align*}
\bar\br - \br & =\bM (\bx - \bar\bx) = \bB  (\bx - \bar\bx)- \bE(\bx - \bar\bx) \\ &= \bB \bB^\dagger (\bE \bx - \br)  - \bE(\bx - \bar\bx) = - \mathcal{P}_\bB  \br + \mathcal{P}_\bB \bE \bx + \bE (\bar\bx - \bx).
\end{align*}
By Inequality \eqref{eq:projections_norm},
$$  \| \mathcal{P}_\bB  \br \| = \| \mathcal{P}_{\bB}\mathcal{P}_{\bM}^\perp \br\| \le \| \mathcal{P}_{\bB}\mathcal{P}_{\bM}^\perp\| \| \br\|\le \| \bE\| \| \bM^\dagger\| \| \br\|$$
and, using Inequality \eqref{eq:norm_x_diff},
\begin{align*}
\|\bar\br - \br \| & \le  \|\mathcal{P}_\bB  \br \| + \| \mathcal{P}_\bB \bE \bx\| + \|\bE (\bar\bx - \bx) \| \le
\| \bE\| \| \bM^\dagger\| \| \br\| + \|\bE\| \|\bx\| + \| \bE\| \|\bar\bx - \bx\| \\
&=   \|\bE\| \left(\| \bM^\dagger\| \| \br\| + \|\bx\|\right) + \frac{ \| \bM^\dagger\| \| \bE\|^2}{1 -\|\bM^\dagger\| \|\bE\|}  \left( \| \bM^\dagger\| \| \br\| + \|\bx\|\right) \\
&= \frac{ \| \bM^\dagger\| \| \bE\|}{1 -\|\bM^\dagger\| \|\bE\|}  \left(\| \br\| + \frac{\|\bx\|}{\| \bM^\dagger\|}\right).
\end{align*}
Finally, taking into account that $\|\br\|\le \| \bd\|$,  $\|\bx\|\le \|\bM^\dagger\| \| \bd\|$, and $\| \bM^\dagger\| \| \bE\|< \kappa_2(\bM)\varepsilon$, we obtain
$$\|\bar\br - \br \| < \frac{2\kappa_2(\bM)}{1- \kappa_2(\bM)\varepsilon} \|\bd\|\varepsilon,$$
which is the stated bound.
\end{proof}

In the Inexact-GenVarPro method described by \cref{Alg:I-GenVarPro}, in Step 3, we apply an iterative method to achieve the following task: for a fixed $\by$, obtain an approximate solution of \eqref{eq: Fx}. To do this, we use the LSQR iterative algorithm with stopping criterion
\begin{equation}\label{eq: stopping_criterion}
\frac{\|\bA_{\lambda, \bL}(\by)^\top \br^{(i)}\|_2}{\|\br^{(i)}\|_2\|\bA_{\lambda, \bL}(\by)\|_2}<\varepsilon,
\end{equation}
for a prescribed tolerance $\varepsilon>0$ (here, $\br^{(i)}$ is the residual at the $i$-th iteration).
The previous lemma applied to $\bM =\bA_{\lambda, \bL}(\by)$ and $ \bd = \left[\begin{array}{c} \bb \\ \bf{0} \end{array}\right]$ enables us to estimate the approximation error: if $\bx$ is the (exact) solution of \eqref{eq: Fx}, $\bar \bx$ the approximate solution computed by the LSQR algorithm with stopping criterion \eqref{eq: stopping_criterion} for a sufficiently small tolerance $\varepsilon>0$,
and  $\br_{\lambda, \bL}=\left[\begin{array}{c} \bb \\ \bf{0} \end{array}\right] -\bA_{\lambda, \bL}(\by)\bx$ and $ \bar \br_{\lambda, \bL} = \left[\begin{array}{c} \bb \\ \bf{0} \end{array}\right] -\bA_{\lambda, \bL}(\by)\bar\bx$ are the corresponding residuals, then
\begin{equation}\label{eq:approx_solution}
\|\bx-\bar \bx\|_2 <  \frac{ 2\kappa_2^2(\bA_{\lambda, \bL}(\by))}{1-\varepsilon\, \kappa_2(\bA_{\lambda, \bL}(\by))} \frac{\|\bb\|_2}{\|\bA_{\lambda, \bL}(\by)\|_2} \varepsilon
\end{equation}
and
\begin{equation}\label{eq:approx_residual}
\|\br_{\lambda, \bL} - \bar \br_{\lambda, \bL}\|_2 <   \frac{ 2 \kappa_2(\bA_{\lambda, \bL}(\by))}{1-\varepsilon\, \kappa_2(\bA_{\lambda, \bL}(\by))}\|\bb\|_2\varepsilon.
\end{equation}

\subsection{Main result}
This subsection is devoted to proving our main theoretical result. We will show that if
$\by^\ast$ is a minimizer of $\mathnormal{f}(\by)=\frac{1}{2}\|\bff_{\lambda,\bL}(\by)\|_2^2$,
and $\by^{(0)}$ is sufficiently close to $\by^*$, we can choose an initial tolerance $\varepsilon^{(0)}$ for the stopping criterion of LSQR so that the sequence $\by^{(k)}$, $k=1,2,...$, computed by Algorithm \ref{Alg:I-GenVarPro} (Inexact-GenVarPro) converges to $\by^*$. To do so, we will bound the errors $\| \by^{(k)} - \by^*\|_2$.

We start by applying the bounds proved in the previous subsection to analyze how the Jacobian of the function $\bff_{\lambda, \bL}(\by)$ defined in \eqref{eq:main_function} changes when the exact solution $\bx$ of \eqref{eq: Fx} is replaced with an LSQR-approximation $\bar \bx$. This will enable us to bound $\|\bJ_{\bff_{\lambda, \bL}}(\by^{(k)}) - \bar\bJ^{(k)}\|_2$,  where  $\bar\bJ^{(k)}$ is the approximate Jacobian defined in \eqref{eq: Jappkj}.

\begin{lemma}\label{lem:approx_Jacobian} For a fixed $\by$, let $\bJ= \bJ_{\bff_{\lambda, \bL}}(\by)$ be the Jacobian of $\bff_{\lambda, \bL}(\by)$,
$$\bJ_j = \mathcal{P}^\perp_{\bA_{\lambda, \bL}} \left[ \begin{array}{c} \frac{\partial \bA}{\partial \by_j} \\ \bf{0} \end{array} \right] \bx(\by) +\left(\frac{\partial \bA}{\partial \by_j} \bA_{\lambda, \bL}^\dagger\right)^\top   \br(\by) \qquad \hbox{ for } j=1,\dots, r,$$
with $\br(\by) = \bb - \bA \bx(\by)$, and $\bar \bJ$ the Jacobian where we replace $\bx(\by)$ by an approximate solution $\bar \bx$ satisfying \eqref{eq: stopping_criterion} for a sufficiently small tolerance $\varepsilon>0$ and $\br(\by)$ by $\bar \br= \bb - \bA \bar \bx$.
Then, we have that
$$\|\bar \bJ - \bJ\|_2
 < 4 \sqrt{r(m+q)} \max_j \left\|\frac{\partial \bA}{\partial \by_j}\right\|_2 \frac{ \kappa_2^2(\bA_{\lambda, \bL})}{1-\varepsilon\, \kappa_2(\bA_{\lambda, \bL})} \frac{\|\bb\|_2}{\|\bA_{\lambda, \bL}\|_2} \varepsilon.$$
\end{lemma}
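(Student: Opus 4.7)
The plan is to bound each column of $\bar\bJ - \bJ$ separately and then convert those column estimates into a spectral-norm bound via a coarse Frobenius-norm inequality.

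First, I would subtract the two formulas column by column, obtaining
\[
\bar\bJ_j - \bJ_j = \mathcal{P}^\perp_{\bA_{\lambda,\bL}}\left[\begin{array}{c} \frac{\partial \bA}{\partial \by_j}\\ \bf{0}\end{array}\right](\bar\bx - \bx) + \left(\frac{\partial \bA}{\partial \by_j}\,\bA_{\lambda,\bL}^\dagger\right)^{\!\top}(\bar\br - \br).
\]
Using $\|\mathcal{P}^\perp_{\bA_{\lambda,\bL}}\|_2 \le 1$, the fact that the block column has operator norm $\|\partial\bA/\partial\by_j\|_2$, together with submultiplicativity and the triangle inequality, reduces the task to bounding $\|\bar\bx-\bx\|_2$ and $\|\bar\br-\br\|_2$.

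For the first of these, inequality \eqref{eq:approx_solution} is exactly what is needed. For the second, I would observe that $\bar\br - \br = \bA(\bx - \bar\bx)$ is the top $m$-dimensional block of $\bar\br_{\lambda,\bL} - \br_{\lambda,\bL} = \bA_{\lambda,\bL}(\bx - \bar\bx)$, so truncation gives $\|\bar\br - \br\|_2 \le \|\bar\br_{\lambda,\bL} - \br_{\lambda,\bL}\|_2$ and then \eqref{eq:approx_residual} applies. Substituting these estimates together with the identity $\|\bA_{\lambda,\bL}^\dagger\|_2 = \kappa_2(\bA_{\lambda,\bL})/\|\bA_{\lambda,\bL}\|_2$ makes the two contributions equal, yielding the per-column inequality
\[
\|\bar\bJ_j - \bJ_j\|_2 < 4\left\|\frac{\partial \bA}{\partial \by_j}\right\|_2\frac{\kappa_2^2(\bA_{\lambda,\bL})}{1-\varepsilon\,\kappa_2(\bA_{\lambda,\bL})}\frac{\|\bb\|_2}{\|\bA_{\lambda,\bL}\|_2}\varepsilon.
\]

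To pass to the spectral norm, I would chain $\|\bar\bJ - \bJ\|_2 \le \|\bar\bJ - \bJ\|_F$ and bound the Frobenius norm entrywise: every entry is bounded in absolute value by the $2$-norm of its column, so summing over the $r(m+q)$ entries gives $\|\bar\bJ-\bJ\|_F \le \sqrt{r(m+q)}\,\max_j\|\bar\bJ_j-\bJ_j\|_2$, which delivers the stated bound. The only point that requires care — more than a true obstacle — is avoiding the naive route $\|\bar\br-\br\|_2 \le \|\bA\|_2\,\|\bar\bx-\bx\|_2$, which would introduce an additional factor of $\kappa_2(\bA_{\lambda,\bL})$ and spoil the correct $\kappa_2^2$ dependence; pulling the bound through the full residual $\bar\br_{\lambda,\bL}-\br_{\lambda,\bL}$ instead is what keeps the exponents matched between the two terms.
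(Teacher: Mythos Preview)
Your proof is correct and follows essentially the same approach as the paper: subtract column by column, bound each column via \eqref{eq:approx_solution} and \eqref{eq:approx_residual} together with the truncation $\|\bar\br-\br\|_2\le\|\bar\br_{\lambda,\bL}-\br_{\lambda,\bL}\|_2$, and then convert the columnwise estimate to a spectral-norm bound. The only cosmetic difference is that the paper routes the last step through the $1$-norm ($\|\cdot\|_2\le\sqrt{r}\,\|\cdot\|_1$ and $\|\bv\|_1\le\sqrt{m+q}\,\|\bv\|_2$) rather than through the Frobenius norm, but both chains land on the same constant $\sqrt{r(m+q)}$.
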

\begin{proof} First, note that
$$\|\bar \bJ - \bJ\|_2 \leq \sqrt{r} \|\bar \bJ - \bJ\|_1 = \sqrt{r} \max_j \left\{\| \bar \bJ_j - \bJ_j\|_1\right\} \leq \sqrt{r (m+q)} \max_j \left \{ \| \bar \bJ_j - \bJ_j\|_2\right\}.$$
Now, for a fixed $j$, we have
\begin{align*}
 \| \bar \bJ_j - \bJ_j\|_2  &= \Big\|\mathcal{P}^\perp_{\bA_{\lambda, \bL}} \left[ \begin{array}{c} \frac{\partial \bA}{\partial \by_j} \\ \bf{0} \end{array} \right] (\bar \bx - \bx) + \left(\frac{\partial \bA}{\partial \by_j} \bA_{\lambda, \bL}^\dagger\right)^\top  (\bar \br - \br) \Big\|_2
\\
& \le \left\|\frac{\partial \bA}{\partial \by_j}\right\|_2 \|\bar\bx- \bx\|_2 + \left\|\frac{\partial \bA}{\partial \by_j}\right\|_2 \| \bA_{\lambda, \bL}^\dagger\|_2 \|\bar\br- \br\|_2.
\end{align*}
The result follows taking into account the bound for $\|\bar\bx- \bx\|_2$ from \eqref{eq:approx_solution}, the fact that $\|\bar\br - \br\|_2\le \|\bar\br_{\lambda, \bL} - \br_{\lambda, \bL}\|_2$, and the bound from \eqref{eq:approx_residual}.
\end{proof}

We are now ready to state and prove our main result. We keep our previous notation.

\begin{theorem} \label{thm: main} Let $\by^*$ be a solution of the problem \eqref{eq: Psiy} and $\bJ^*=\bJ(\by^*)$, where $\bJ= \bJ_{\bff_{\lambda, \bL}}(\by)$ is the Jacobian of $\bff_{\lambda, \bL}(\by)$.
Assume that $\bJ$ satisfies the Lipschitz condition $\|\bJ(\by) - \bJ^\ast \|_2\le L \| \by - \by^\ast\|_2$ with a constant $L>0$ for all $\by$ in a neighborhood of $\by^\ast$, and that there are constants $M>0$,  $\kappa>0$, and $\gamma >0$ such that $\max_j \left\|\frac{\partial \bA (\by)}{\partial \by_j}\right\|_2 \le M$, $\kappa_2(\bA_{\lambda, \bL}(\by))\le \kappa$, and $\| \bA_{\lambda, \bL}(\by)\|_2 \ge \gamma$ for all $\by$ in a neighbourhood of $\by^*$.
Then, if $\|\by^{(0)}- \by^*\|_2$ is sufficiently small, one can choose adequate tolerances $\varepsilon^{(k)}>0$ so that the sequence $(\by^{(k)})_{k\ge 0}$ generated by the  Inexact-GenVarPro Algorithm satisfies $\|\by^{(k)}-\by^*\|_2\le \dfrac{1}{2^k}$ for every $k\in \mathbb{N}$.
\end{theorem}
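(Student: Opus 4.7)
The plan is to take $2$-norms on both sides of the identity \eqref{eq: BkBkek}, bound each of the three bracketed terms on the right-hand side separately using Lemma \ref{lem:approx_solution_M}, Lemma \ref{lem:approx_Jacobian}, and the Lipschitz hypothesis on $\bJ$, and then isolate $\be^{(k+1)}$ by lower-bounding the smallest singular value of $(\bar\bJ^{(k)})^\top\bar\bJ^{(k)}$. This produces a recursive inequality for $\|\be^{(k)}\|_2$ that closes by induction, exploiting the prescribed choice $\varepsilon^{(k)}=\varepsilon^{(0)}/2^k$. I would start by fixing a sufficiently small neighborhood $U$ of $\by^\ast$ and an initial tolerance $\varepsilon^{(0)}>0$ so that, by Lemma \ref{lem:approx_Jacobian} and the Lipschitz bound on $\bJ$, the matrix $\bar\bJ^{(k)}$ remains uniformly close to $\bJ^\ast$ whenever $\by^{(k)}\in U$ and $\varepsilon^{(k)}\le\varepsilon^{(0)}$. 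A standard perturbation argument then ensures that $(\bar\bJ^{(k)})^\top\bar\bJ^{(k)}$ is invertible with a uniform bound on the norm of its inverse depending only on $\sigma_{\min}(\bJ^\ast)$, and that $\|\bff_{\lambda,\bL}^{(k)}\|_2$, $\|\bJ^{(k)}\|_2$, and $\|\bar\bJ^{(k)}\|_2$ stay uniformly bounded throughout the iteration.

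Next I would estimate the three bracketed summands on the right-hand side of \eqref{eq: BkBkek}. The first bracket rewrites as $(\bJ^{(k)}-\bar\bJ^{(k)})^\top\bff_{\lambda,\bL}^{(k)} + (\bar\bJ^{(k)})^\top(\bff_{\lambda,\bL}^{(k)}-\bg^{(k)})$, and is therefore $O(\varepsilon^{(k)})$ by Lemma \ref{lem:approx_Jacobian} together with the residual estimate \eqref{eq:approx_residual} applied to $\bA_{\lambda,\bL}(\by^{(k)})$. For the second bracket I would insert $(\bJ^{(k)})^\top\bJ^{(k)}$ and $(\bJ^\ast)^\top\bJ^\ast$ to produce three telescoping differences: one of order $\varepsilon^{(k)}$ (Lemma \ref{lem:approx_Jacobian}), one of order $\|\be^{(k)}\|_2$ (Lipschitz continuity of $\bJ$), and the fixed residual-Hessian difference $(\bJ^\ast)^\top\bJ^\ast-\nabla^2 f(\by^\ast)$, which I would absorb by shrinking $U$. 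Multiplying by $\be^{(k)}$ yields a bound of the form $(K\varepsilon^{(k)}+\delta)\|\be^{(k)}\|_2$ with $\delta$ as small as desired. The third bracket is a Taylor remainder of order $\|\be^{(k)}\|_2^2$ under the Lipschitz hypothesis.

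Assembling these estimates and dividing through by the spectral lower bound for $(\bar\bJ^{(k)})^\top\bar\bJ^{(k)}$ would yield a recursion of the form $\|\be^{(k+1)}\|_2 \le \alpha\,\varepsilon^{(k)} + \beta\,\|\be^{(k)}\|_2 + \gamma\,\|\be^{(k)}\|_2^2$ with explicit constants $\alpha,\beta,\gamma>0$ and, by virtue of the previous step, $\beta<1/2$. The theorem would then follow by induction on $k$: assuming $\|\be^{(k)}\|_2\le 1/2^k$ and using $\varepsilon^{(k)}=\varepsilon^{(0)}/2^k$, the recursion would yield $\|\be^{(k+1)}\|_2 \le (\alpha\varepsilon^{(0)}+\beta+\gamma/2^k)/2^k \le 1/2^{k+1}$, provided $\varepsilon^{(0)}$ and $\|\be^{(0)}\|_2$ are selected so that $\alpha\varepsilon^{(0)}+\beta+\gamma\le 1/2$.

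The main obstacle will be securing $\beta<1/2$ in the recursion. This is the familiar Gauss--Newton contraction hurdle: it requires controlling the curvature-residual contribution $(\bJ^\ast)^\top\bJ^\ast - \nabla^2 f(\by^\ast)$ relative to $\sigma_{\min}(\bJ^\ast)^2$. Once this is arranged, implicitly via the hypothesis that $\by^{(0)}$ lie sufficiently close to a well-behaved minimizer, the rest of the proof is careful bookkeeping of the constants produced in the term-by-term estimates above.
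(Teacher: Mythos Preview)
Your proposal is correct and mirrors the paper's proof: same decomposition via \eqref{eq: BkBkek}, same three-term bound using Lemma~\ref{lem:approx_Jacobian}, inequality~\eqref{eq:approx_residual}, Lipschitz continuity of $\bJ$, and Taylor expansion, same perturbation argument to bound $\|((\bar\bJ^{(k)})^\top\bar\bJ^{(k)})^{-1}\|$, and the same inductive closure of the resulting recursion $\|\be^{(k+1)}\|\le c_1\varepsilon^{(k)}+c_2\|\be^{(k)}\|+c_3\|\be^{(k)}\|^2$. One correction worth noting: the quantity $\delta^\ast=\|(\bJ^\ast)^\top\bJ^\ast-\nabla^2 f(\by^\ast)\|$ is fixed at $\by^\ast$ and cannot be ``absorbed by shrinking $U$'' as you first suggest; the paper does exactly what your final paragraph anticipates and imposes, inside the proof, the standing Gauss--Newton small-residual assumption $\beta\delta^\ast<\tfrac{1}{16}$ (with $\beta=\|((\bJ^\ast)^\top\bJ^\ast)^{-1}\|$), even though this hypothesis does not appear in the theorem statement.
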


\begin{proof}
Throughout the proof, we will work with $\| \cdot \|_2$ for vectors and matrices. To simplify notation, we will not write the subscript $2$.

For $k\ge 0$, let $\bar \bJ^{(k)}$ be the approximate Jacobian at $\by^{(k)}$ obtained as in \eqref{eq: Jappkj}; that is, we replace the solution $\bx^{(k)}=\bx(\by^{(k)})$ of $\min_{\bx} \frac{1}{2} \| \bA_{\lambda, \bL}(\by^{(k)}) \bx - [ \bb^\top \ \mathbf{0} ]^\top\|^2$ with an approximate solution $\bar \bx^{(k)}$ satisfying \eqref{eq: stopping_criterion} for our chosen tolerance $\varepsilon^{(k)}>0$. We also write
$$\bff^{(k)} = \bff_{\lambda,\bL}(\by^{(k)}) = \bA_{\lambda, \bL}(\by^{(k)}) \bx^{(k)} - \left[\begin{array}{c} \bb \\ \bf{0} \end{array}\right]
\quad \mbox{and} \quad
\bg^{(k)} = \bA_{\lambda, \bL}(\by^{(k)}) \bar \bx^{(k)} - \left[\begin{array}{c} \bb \\ \bf{0} \end{array}\right].$$

Assume $\alpha>0$ is an upper bound for $\|\bJ^\ast\|$ and $\| \bar\bJ^{(k)}\|$ for all $k$.
Set $\beta= \|((\bJ^\ast)^\top \bJ^\ast)^{-1}\|$ and $\delta^\ast= \|  (\bJ^\ast)^\top \bJ^\ast- \nabla^2 \mathnormal{f} (\by^\ast) \| $. We assume that $\delta^*$ is small, that is to say, that $(\bJ^\ast)^\top \bJ^\ast$ is a close approximation to $\nabla^2 \mathnormal{f} (\by^\ast)$, which is a usual assumption when applying the Gauss-Newton method (see for instance \cite[Section 10.3]{nocedal2006numerical}).

For every $k\ge 0$, let $\be^{(k)}= \by^{(k)} - \by^*$.
From Identity \eqref{eq: BkBkek}, we have that
\begin{align}\label{eq: ek+1}
     \be^{(k+1)} & = ((\bar\bJ^{(k)})^\top \bar\bJ^{(k)})^{-1} \Big(
     \left[ -\nabla \mathnormal{f}(\by^{(k)}) + \nabla \mathnormal{f} (\by^\ast) + \nabla^2 \mathnormal{f} (\by^\ast) \be^{(k)} \right] +  \nonumber\\
    &  + \left[ \nabla \mathnormal{f}(\by^{(k)}) - (\bar\bJ^{(k)})^\top \bg^{(k)} \right]
     + \left[(\bar\bJ^{(k)})^\top \bar\bJ^{(k)} - \nabla^2 \mathnormal{f} (\by^\ast) \right]\be^{(k)}
        \Big).
\end{align}
We will now bound the norm of each of the three terms in the above expression in terms of $\varepsilon^{(k)}$ and $\| \be^{(k)}\|$.

By Taylor expansion and the smoothness of $f$, there is a constant $T>0$ such that
\begin{equation}\label{eq:taylor}
\|-\nabla \mathnormal{f}(\by^{(k)}) + \nabla \mathnormal{f} (\by^\ast) + \nabla^2 \mathnormal{f} (\by^\ast) \be^{(k)}\| \le T \| \be^{(k)}\|^2.
\end{equation}

In order to get an upper  bound for $\|\nabla \mathnormal{f}(\by^{(k)}) - (\bar\bJ^{(k)})^\top \bg^{(k)}\|$, recall that \linebreak
$\nabla \mathnormal{f}(\by^{(k)})= (\bJ^{(k)})^\top \bff^{(k)}$. Then,
\begin{align*}
\|\nabla \mathnormal{f}(\by^{(k)}) - (\bar\bJ^{(k)})^\top \bg^{(k)}\| &= \| (\bJ^{(k)})^\top \bff^{(k)} - (\bar\bJ^{(k)})^\top \bg^{(k)}\| \\
& \le \| (\bJ^{(k)})^\top  - (\bar\bJ^{(k)})^\top  \|   \|\bff^{(k)}\| +  \|(\bar\bJ^{(k)})^\top \| \|\bff^{(k)}- \bg^{(k)}\|.
\end{align*}
Assume that $\varepsilon^{(k)} \kappa <\frac{1}{2}$. By Lemma \ref{lem:approx_Jacobian},
\begin{align*}
\|\bJ^{(k)}  - \bar\bJ^{(k)}\| &<  8\sqrt{r(m+q)} M  \kappa_2^2(\bA_{\lambda, \bL}(\by^{(k)})) \frac{\|\bb\|}{\|\bA_{\lambda, \bL}(\by^{(k)})\|} \varepsilon^{(k)}
\le K  \varepsilon^{(k)},
\end{align*}
where $K=8\sqrt{r(m+q)} M  \kappa^2 \gamma^{-1} \| \bb\|$.
In addition, we have that
$\|\bff^{(k)}\| \le \| \bb\|$ and that inequality \eqref{eq:approx_residual} implies:
 $$\| \bff^{(k)}- \bg^{(k)} \| <  4\kappa \|\bb\| \varepsilon^{(k)}. $$
Therefore,
\begin{equation}\label{eq:gradient}
\|\nabla \mathnormal{f}(\by^{(k)}) - (\bar\bJ^{(k)})^\top \bg^{(k)}\|<  (K+ 4\kappa \alpha )\| \bb\| \varepsilon^{(k)}.
\end{equation}

To get a bound for the last term in the expression \eqref{eq: ek+1} of $\be^{(k+1)}$, note that
$$\| (\bar\bJ^{(k)})^\top \bar\bJ^{(k)} - \nabla^2 \mathnormal{f} (\by^\ast) \| \le \| (\bar\bJ^{(k)})^\top \bar\bJ^{(k)} - (\bJ^\ast)^\top \bJ^\ast\| +\|  (\bJ^\ast)^\top \bJ^\ast- \nabla^2 \mathnormal{f} (\by^\ast) \|.$$
Now, we have that
\begin{align*}
\|(\bar\bJ^{(k)})^\top \bar\bJ^{(k)} - (\bJ^\ast)^\top \bJ^\ast\|  &\le
\|(\bar\bJ^{(k)})^\top - (\bJ^\ast)^\top \| \|\bar\bJ^{(k)} \|+ \|(\bJ^\ast)^\top \| \|\bar\bJ^{(k)} - \bJ^\ast \| \\ &\le 2\alpha \|\bar\bJ^{(k)} - \bJ^\ast \|
\end{align*}
and so,
\begin{equation}\label{eq:hessian}
\| ((\bar\bJ^{(k)})^\top \bar\bJ^{(k)} - \nabla^2 \mathnormal{f} (\by^\ast) ) \be^{(k)}\| \le (2\alpha \|\bar\bJ^{(k)} - \bJ^\ast \| +\delta^\ast) \|\be^{(k)}\|.
\end{equation}

Finally, we will obtain an upper bound for $\|((\bar\bJ^{(k)})^\top \bar\bJ^{(k)})^{-1}\|$.
By Lemma \ref{lem:approx_Jacobian} and the Lipschitz assumption on $\bJ$,
$$\|\bar\bJ^{(k)} - \bJ^\ast \| \le \|\bar\bJ^{(k)} - \bJ^{(k)}\|+\|\bJ^{(k)} - \bJ^\ast \| < K  \varepsilon^{(k)} + L \| \be^{(k)}\|.$$
For a positive $\delta$,
if $\|\be^{(k)}\| \le  \frac{\delta}{2L}$, taking a tolerance $\varepsilon^{(k)}< \frac{\delta}{2 K}$, we obtain
\begin{equation}\label{eq: diffJapJmin}
\|\bar\bJ^{(k)} - \bJ^\ast \|<\delta.
\end{equation}
Then,  taking $\delta\le\frac{1}{4\alpha\beta}$, we obtain:
$$\|(\bar\bJ^{(k)})^\top \bar\bJ^{(k)} - (\bJ^\ast)^\top \bJ^\ast\|\le 2\alpha  \|\bar\bJ^{(k)} - \bJ^\ast \| < \dfrac{1}{2\beta}, $$
and, as a consequence,
$$\|((\bJ^\ast)^\top \bJ^\ast)^{-1} \left((\bar\bJ^{(k)})^\top \bar\bJ^{(k)} - (\bJ^\ast)^\top \bJ^\ast\right)\| \le 2\alpha \beta \|\bar\bJ^{(k)} - \bJ^\ast \| < \frac{1}{2}.$$
Therefore, by \cite[Theorem 3.1.4]{dennisschnabel},
\begin{align*}
\|((\bar\bJ^{(k)})^\top \bar\bJ^{(k)})^{-1}\|& \le  \frac{\|((\bJ^\ast)^\top \bJ^\ast)^{-1}\|}{1-\|((\bJ^\ast)^\top \bJ^\ast)^{-1} \left((\bar\bJ^{(k)})^\top \bar\bJ^{(k)} - (\bJ^\ast)^\top \bJ^\ast\right)\|}
< 2 \beta.
\end{align*}

From this inequality together with \eqref{eq:taylor}, \eqref{eq:gradient} and \eqref{eq:hessian}, it follows that
\begin{align*}
\| \be^{(k+1)}\|  & \le \|((\bar\bJ^{(k)})^\top \bar\bJ^{(k)})^{-1}\| \Big(T \|\be^{(k)}\|^2 +  (K  + 4\kappa \alpha )\| \bb\| \varepsilon^{(k)}   \\
& \qquad \qquad \qquad \qquad \qquad \qquad \qquad \qquad  + (2\alpha \|\bar\bJ^{(k)} - \bJ^\ast \| +\delta^\ast) \|\be^{(k)}\|\Big)\\
& < 2\beta T \|\be^{(k)}\|^2 + 2\beta (K + 4\kappa \alpha )\| \bb\| \varepsilon^{(k)} + 2\beta (2\alpha \|\bar\bJ^{(k)} - \bJ^\ast \| +\delta^\ast)\|\be^{(k)}\|.
\end{align*}

Assume $$\|\be^{(k)}\| \le \min \left\{ \frac{\delta}{2L}, \frac{1}{16 \beta T}, \frac{1}{2^{k}}\right\}.$$
Then, $2\beta T \| \be^{(k)}\|<\frac{1}{8}$ and, if $\beta\delta^* <\frac{1}{16}$, we have that
$2\beta (2\alpha \|\bar\bJ^{(k)} - \bJ^\ast \| +\delta^\ast) < \frac{1}{8}$
provided that
$\|\bar\bJ^{(k)} - \bJ^\ast \| < \frac{1-16\beta \delta^*}{32\alpha \beta}.$
Let $\delta=\frac{1-16\beta \delta^*}{32\alpha \beta}.$ Note that $\delta < \frac{1}{4\alpha\beta}$, as required by our previous bounds.
Taking into account that  $\|\bar\bJ^{(k)} - \bJ^\ast \| <\delta$ (see Equation \eqref{eq: diffJapJmin}), we conclude that
$$\| \be^{(k+1)}\| < \frac{1}{4} \|\be^{(k)} \| + 2\beta (K  + 4\kappa \alpha )\| \bb\| \varepsilon^{(k)}. $$
Now, if $\varepsilon^{(k)}\le \frac{1}{8\beta (K  + 4\kappa \alpha )\| \bb\|} \min \left\{ \frac{\delta}{L}, \frac{1}{8\beta T}, \frac{1}{2^k}\right\}$, it follows that
\begin{align*}
\| \be^{(k+1)}\| &\le \frac{1}{4}  \min \left\{ \frac{\delta}{2L}, \frac{1}{16 \beta T}, \frac{1}{2^k}\right\} +  \frac{1}{4}   \min \left\{ \frac{\delta}{L}, \frac{1}{8\beta T}, \frac{1}{2^k}\right\} \\
& \le \min \left\{ \frac{\delta}{2L}, \frac{1}{16 \beta T}, \frac{1}{2^{k+1}}\right\}.
\end{align*}
This concludes the proof.
\end{proof}

\begin{remark}\label{remark:e0} From the proof of Theorem \ref{thm: main}, we deduce how to choose the tolerances $\varepsilon^{(k)}$ to ensure the stated convergence rate for the Inexact-GenVarPro algorithm. Roughly speaking, $\varepsilon^{(k)} \simeq C/2^k$ for a constant $C$ depending on the problem. In practice, since $C$ is unknown, we choose an initial tolerance $\varepsilon^{(0)}$ satisfying  $\varepsilon^{(0)}\kappa_2(\bA_{\lambda, \bL}(\by^{(0)}))\ll 1$ so that the bound in Equation \eqref{eq:approx_solution} is positive and small, and for $k=1,2,...$, we take $\varepsilon^{(k)} = \varepsilon^{(k-1)}/2$.
\end{remark}

\section{Numerical Example}\label{sec: numexamples}
This section presents a blind deconvolution problem used to confirm our analysis. In particular, this problem was chosen so that it fits our assumptions in terms of its dimensions (small dimension for the nonlinear variable $\by$, $r=1$, and large dimension for the linear variable $\bx$, $n=128$), and we can easily compute both the exact Jacobian and its approximations.

The problem is described by the forward model
$$b(s)=\int_{a}^b g(s-s')x(s')ds',$$
where we are assuming a Gaussian kernel of the form
$$g(s)={\rm{exp}} \left( -\frac{s^2}{2\sigma^2}\right),$$
containing only the parameter $\sigma$ (i.e., in this case $\by=\sigma$). To obtain the system $\bA(\by)\bx=\bb$, we consider $n=128$ discretization points and apply the midpoint quadrature rule to approximate the integral. We assume zero boundary conditions on the function $x$ (and therefore on its discretized representation $\bx$) to obtain the symmetric Toeplitz matrix $\bA(\by)$ with its first row defined as
\begin{align}
   [\bA(\by)]_{1,j}=  c \, \exp\left(\frac{-(j-1)^2}{2\sigma^2}\right), \qquad j=1, \dots, n,\nonumber
\end{align}
where $c = 1/\left(\sum_{j} \exp(-(j-1)^2/2\sigma^2)\right)$.

\begin{figure}
    \centering
   \includegraphics[width=0.45\textwidth]{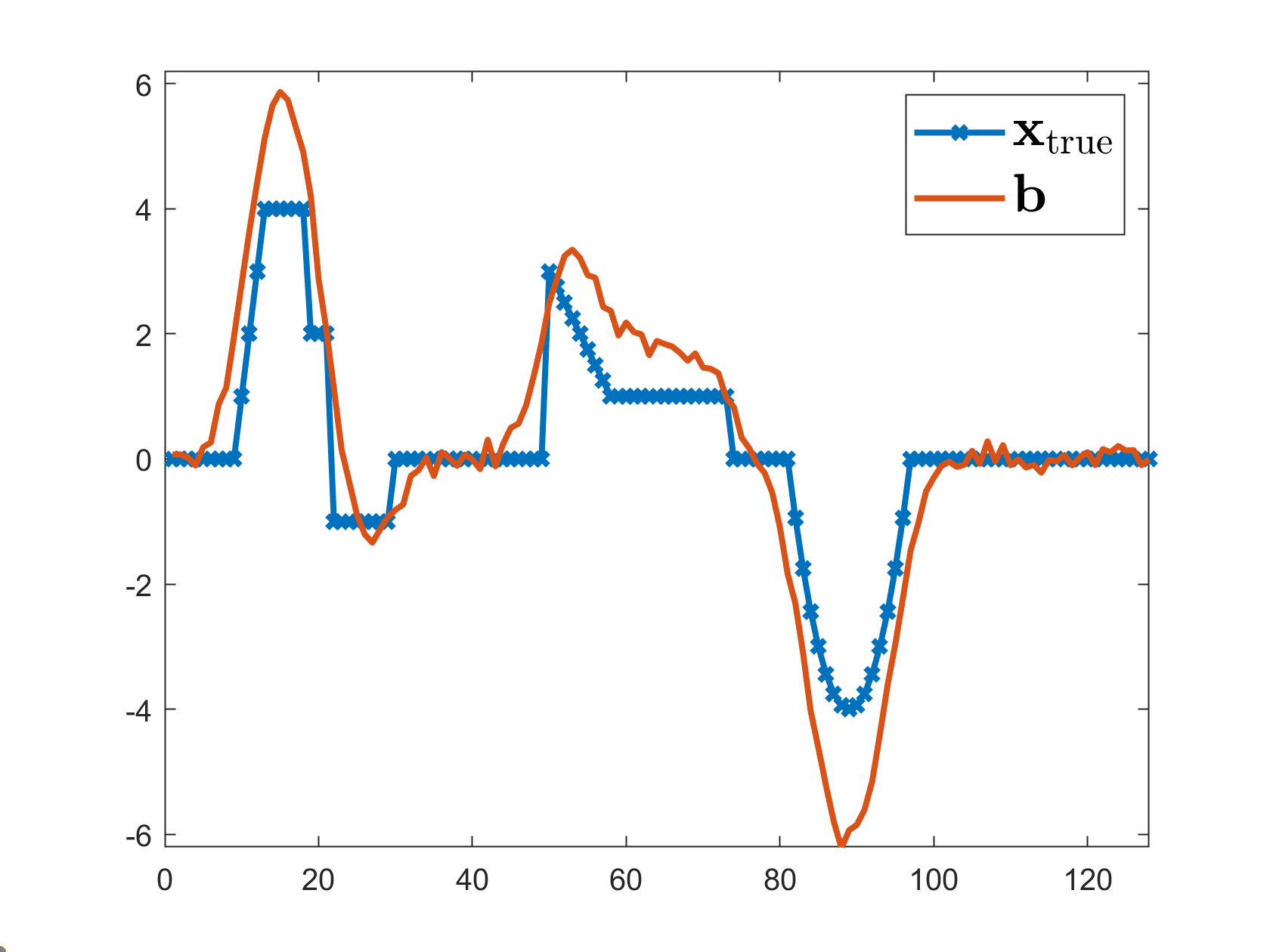}
    \includegraphics[width=0.45\textwidth]{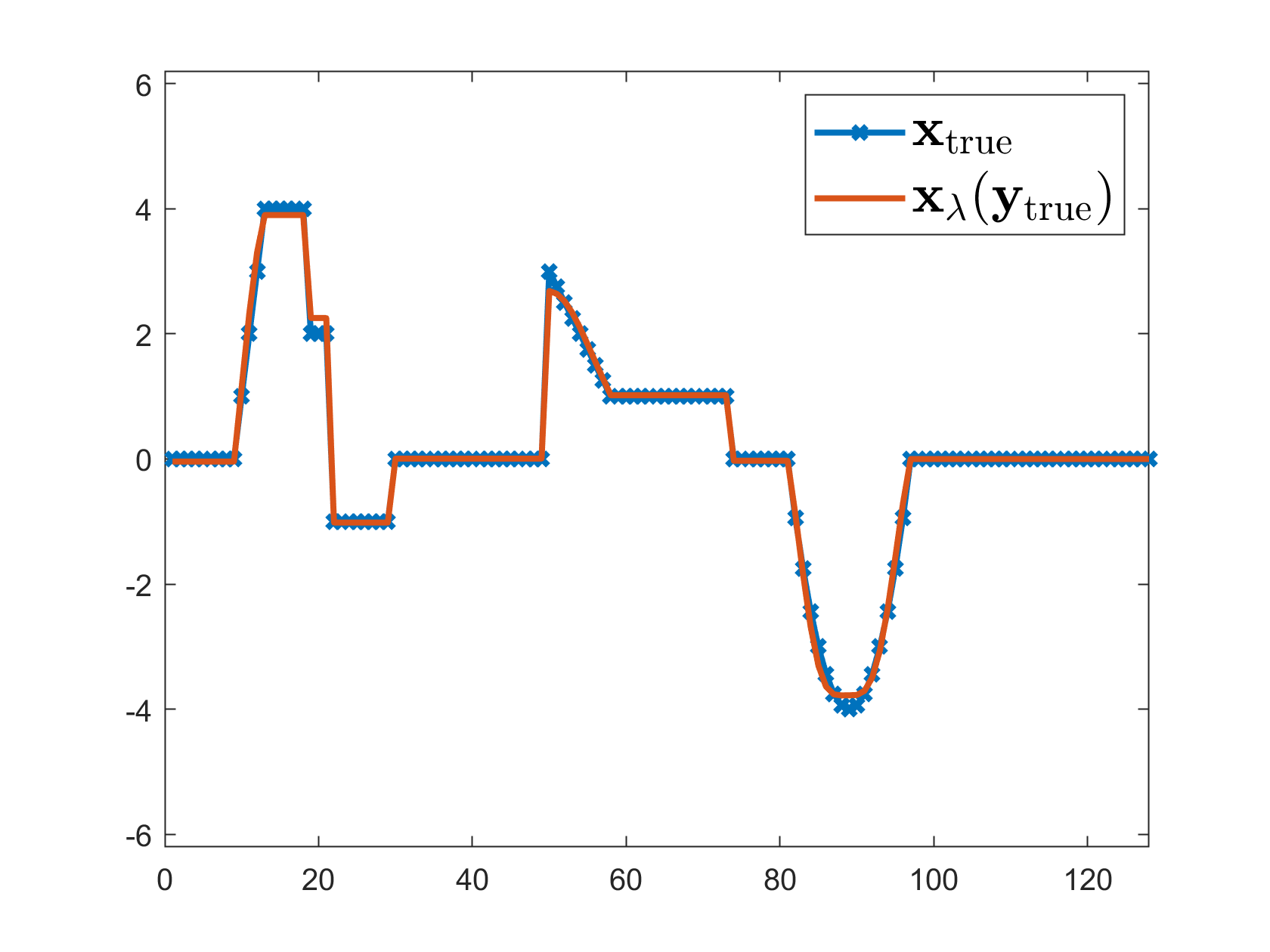}
    \caption{Left: the true signal $\bx_{{\rm true}}$ and the blurred and noisy signal $\bb$. Right: a reconstructed Tikhonov solution using $\by_{{\rm true}}$.}
    \label{fig:solution}
\end{figure}
We consider $\sigma=3$ (i.e., $\by_{{\rm true}}=3$). The condition number of $\bA(\by_{{\rm true}})$ is $1.37 \times 10^{17}$. The exact solution, represented by $\bx_{{\rm true}}$, is the vector of length $128$ shown in Figure \ref{fig:solution}. The noise-free blurred signal, represented by $\bb_{{\rm true}}$, is computed as
$\bb_{{\rm true}}=\bA(\by_{{\rm true}})\bx_{{\rm true}}$. The elements of the noise vector $\bepsilon$ are
normally distributed with zero mean, and the standard deviation is chosen such that $\frac{\|\bepsilon\|_2}{\|\bb_{{\rm true}}\|_2}=0.05$. In this case, we say that the noise level is $5\%$. The noisy right-hand side of our system is defined by $\bb=\bb_{{\rm true}}+\bepsilon$ (see Figure \ref{fig:solution}).

We choose the matrix $\bL \in \mathbb{R}^{(n-1) \times n}$ defined so that $\|\bL \bx\|_2^2 \approx \|\bD \bx\|_1$, where
\begin{align}
    \bD = \begin{bmatrix}
        -1 & 1 & 0 &\cdots && 0\\
        0 & -1 & 1 & 0 & \cdots& 0 \\
        \vdots & & \ddots & \ddots && \vdots \\
        0 & \cdots & 0 & -1 & 1 & 0 \\
        0 & \cdots & & 0& -1 & 1
    \end{bmatrix}\nonumber
\end{align}
is a discretization of the first derivative operator. Then, we define $\bL=\bW \bD$, where $\bW$ is a diagonal matrix with entries defined by $\bD\bx_{\rm true}$. We use a fixed value of $\lambda=0.0379$. This value was chosen from 20 logarithmically spaced values of $\lambda$ from $10^{-3}$ to 1 so that, together with our selection of $\bL$, we have a reduced minimization problem with minimizer $\by\approx 3$.
We solve the separable nonlinear least squares problem using GenVarPro and Inexact-GenVarPro with initial guesses $\by^{(0)}=2$ and $\by^{(0)}=4$.

To solve the linear subproblem \emph{`exactly'}, we use MATLAB backslash to solve the normal equation
$$\bA_{\lambda, \bL}^\top \bA_{\lambda, \bL}\bx = \bA_{\lambda, \bL}^\top \bb.$$ To solve the linear subproblem \emph{approximately}, we use the MATLAB built-in function LSQR. We set the maximum number of iterations for LSQR to $10,000$, which was never reached in the runs. We test four different tolerance sequences for LSQR to see the impact of approximating $\bx$ to compute the Jacobian and the residual:
\begin{itemize}
\item LSQR-b: a fixed large tolerance $\varepsilon^{(k)}=\varepsilon^{(0)}$ for every $k$;
\item LSQR-lb: a linearly decreasing tolerance $\varepsilon^{(k)}=\varepsilon^{(0)}/k$ for every $k$;
\item LSQR-ab: an exponentially decreasing tolerance $\varepsilon^{(k)}=\varepsilon^{(k-1)}/2$ for every $k$; and
\item LSQR-s: a fixed small tolerance $\varepsilon^{(k)}=10^{-11}$ for every $k$.
\end{itemize}
\medskip

Note that LSQR-ab corresponds to the tolerance sequence chosen in Algorithm \ref{Alg:I-GenVarPro}, and so, the theoretical results in Theorem \ref{thm: main} apply to this case.

Following Remark \ref{remark:e0}, we use $\varepsilon^{(0)}=1.8718\times 10^{-4}$ for $\by^{(0)}=2$ and $\varepsilon^{(0)}=1.1239\times 10^{-4}$ for $\by^{(0)}=4$ for LSQR-b, LSQR-lb, and LSQR-ab.

In \cref{fig:1dyvf}, we compare the different reconstructed parameters $\by$ at iteration $k$ given by GenVarPro and Inexact-GenVarPro with different tolerance sequences in LSQR. In that figure, we can also see how the functional $\mathcal{F}(\bar\bx^{(k)}, \by^{(k)})$ decreases. As expected, the smaller the tolerance, the faster the decrease. We can see that the convergence is independent of the initial guess $\by^{(0)}$.

\begin{figure}
    \centering
   \includegraphics[width=1\textwidth]{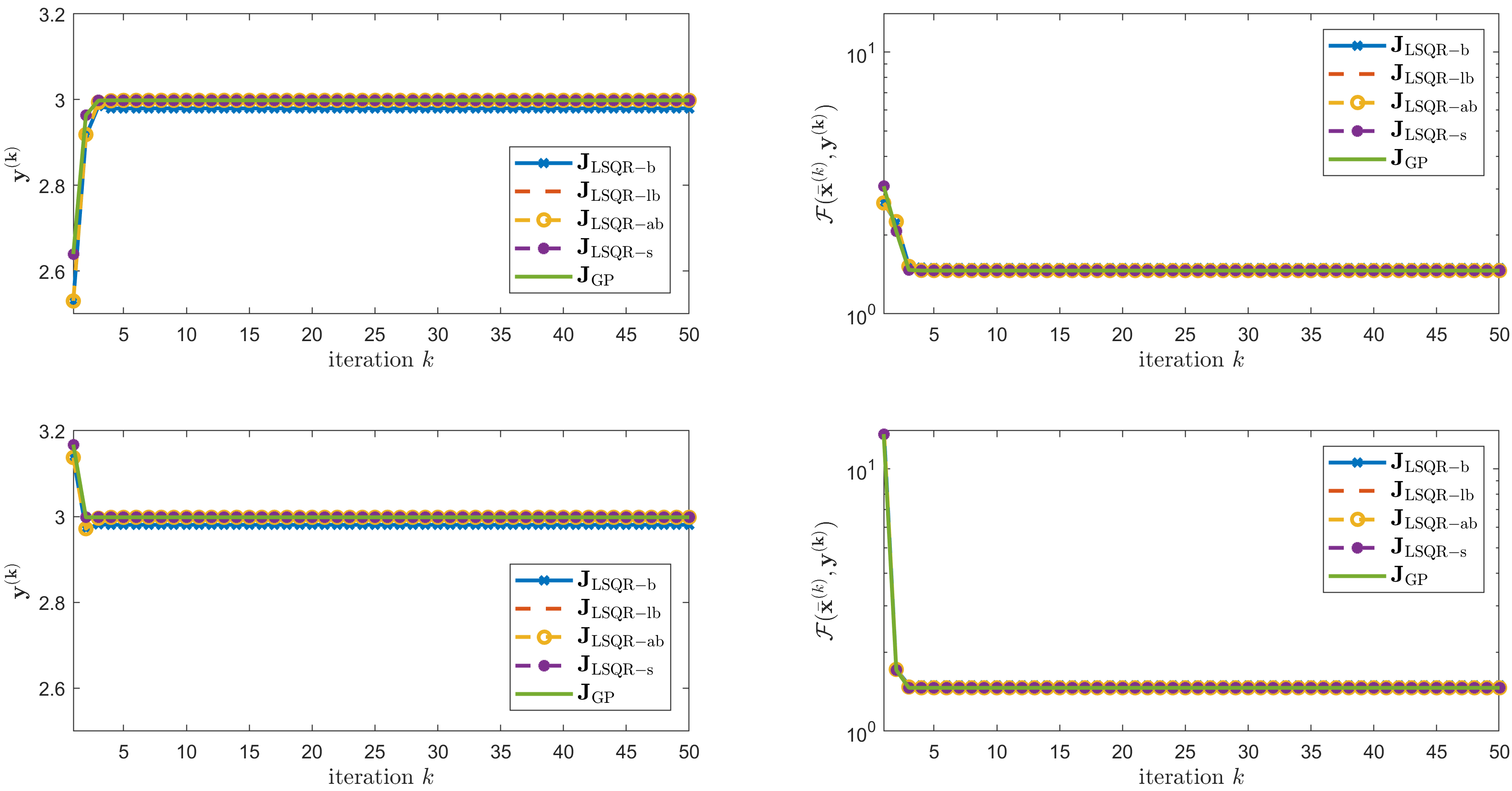}
    \caption{Convergence curves of the GenVarPro \emph{(GP)} and Inexact-GenVarPro method with different tolerances: 1) $\varepsilon^{(k)}=\varepsilon^{(0)}$ \emph{(LSQR-b)}, 2) $\varepsilon^{(k)}=\varepsilon^{(0)}/k$ \emph{(LSQR-lb)}, 3) $\varepsilon^{(k)}=\varepsilon^{(k-1)}/2$ \emph{(LSQR-ab)},  and 4) $\varepsilon^{(k)} = 10^{-11}$ \emph{(LSQR-s)}. The left column contains the values of $\by^{(k)}$ for each iteration obtained using $\by^{(0)}=2$ (top) and $\by^{(0)}=4$ (bottom). The right column depicts the values of the function $\mathcal{F}(\bar\bx^{(k)}, \by^{(k)})$ for each iteration using $\by^{(0)}=2$ (top) and $\by^{(0)}=4$ (bottom).}
    \label{fig:1dyvf}
\end{figure}

Next, in order to see the impact of the approximation of $\bx$ using LSQR, we compute the distance between the values of $\by$ at each iteration given by GenVarPro and by Inexact-GenVarPro with the different tolerance sequences. That is, we compute $|\by_{\rm GP}^{(k)}-\by_{\rm LSQR}^{(k)}|$ for $k=1,\dots, 50$ (see Figure \ref{fig:1ddify}). We use a logarithmic scale on the vertical axis on the right of Figure \ref{fig:1ddify} to better appreciate the different convergence rates. We can see there that Inexact-GenVarPro with LSQR-ab also has an exponential convergence rate, in accordance with our theoretical results.
\begin{figure}[!ht]
    \centering
   \includegraphics[width=1\textwidth]{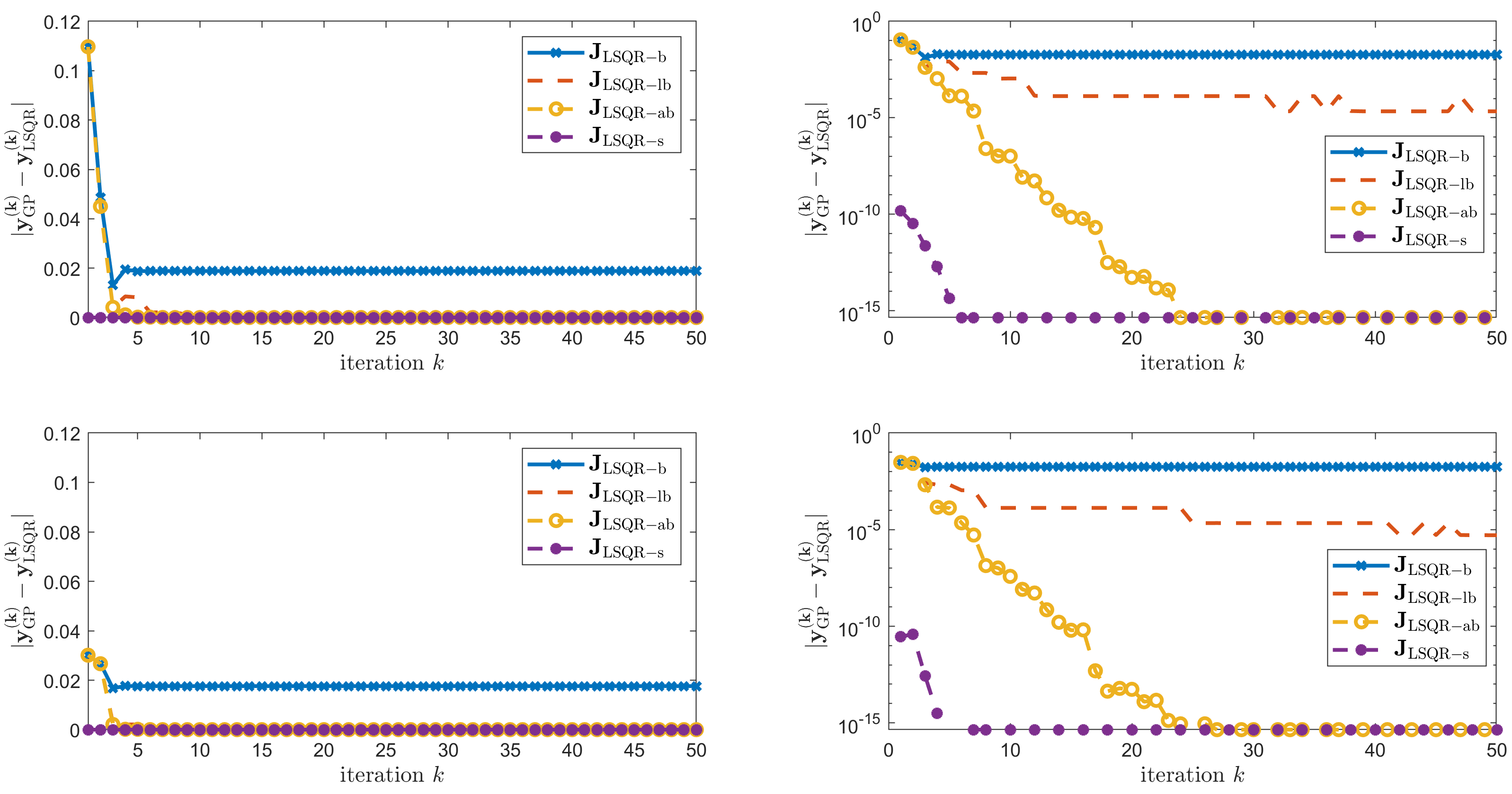}
    \caption{Distances between the solutions $\by$ given at each iteration by GenVarPro ($\by^{(k)}_{\rm{GP}}$) and Inexact-GenVarPro ($\by^{(k)}_{\rm{LSQR}}$) with different tolerances (\emph{LSQR-b}, \emph{LSQR-lb}, \emph{LSQR-ab}, and \emph{LSQR-s}) using $\by^{(0)}=2$ (top) and $\by^{(0)}=4$ (bottom). The right column contains the same data using a logarithmic scale in the vertical axis.}
    \label{fig:1ddify}
\end{figure}

\newpage
In Figure \ref{fig:1dEB}, we show $\|\bx^{(k)}-\bar\bx^{(k)}\|_2$ and the error bound $$\frac{ 2\kappa_2^2(\bA_{\lambda, \bL}(\by^{(k)}))}{1-\varepsilon\, \kappa_2(\bA_{\lambda, \bL}(\by^{(k)}))} \frac{\|\bb\|_2}{\|\bA_{\lambda, \bL}(\by^{(k)})\|_2} \varepsilon^{(k)}$$ for each iteration $k$, which appear in Equation \eqref{eq:approx_solution}. Comparing the figures on the left and the corresponding ones on the right, we see that the error bounds in \eqref{eq:approx_solution} are verified at each iteration for all tolerances, but the exponential one, which is verified up to iteration $k\approx 26$. After iteration $k=26$, some numerical issues appear, and the error bound continues decreasing while $\|\bx^{(k)}-\bar\bx^{(k)}\|_2$ stagnates. First, note that for a large $k$, the tolerance $\varepsilon^{(k)}$ in LSQR-ab reaches machine epsilon. Secondly, the $\bx^{(k)}$ is obtained using MATLAB backslash, which uses a QR algorithm, and therefore, we cannot say how accurate $\bx^{(k)}$ is.

\begin{figure}[!ht]
    \centering
   \includegraphics[width=1\textwidth]{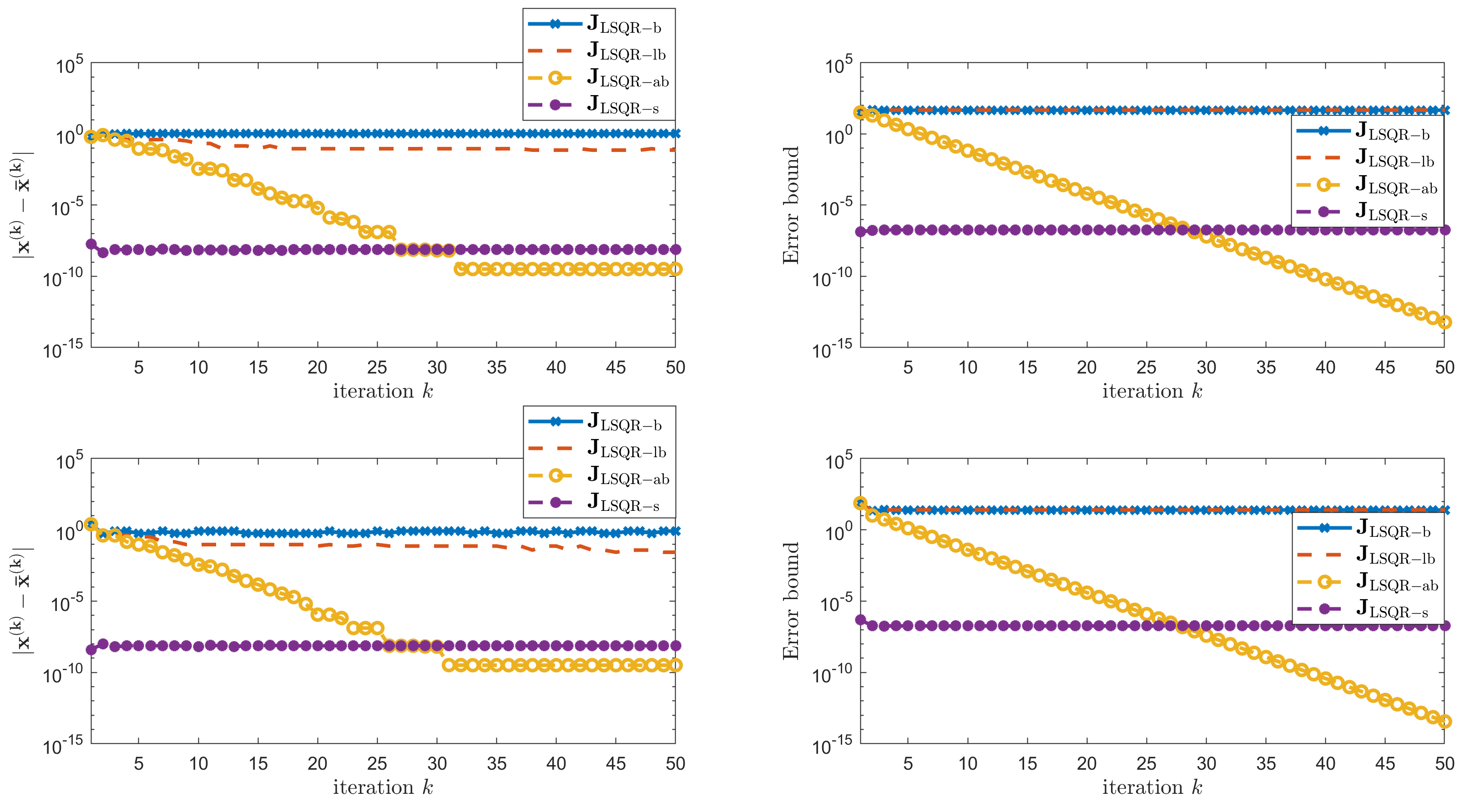}
    \caption{Left column:  $\|\bx^{(k)}-\bar\bx^{(k)}\|_2$, where $\bx^{(k)}$ is the exact solution of the linear subproblem and  $\bar\bx^{(k)}$ its LSQR approximation at each iteration, using \emph{LSQR-b}, \emph{LSQR-lb}, \emph{LSQR-ab}, and \linebreak \emph{LSQR-s}, respectively, with $\by^{(0)}=2$ (top) and $\by^{(0)}=4$ (bottom).
    Right column: error bounds computed according to \eqref{eq:approx_solution} for each case.}
    \label{fig:1dEB}
\end{figure}

All tests were performed using MATLAB R2022b on a single processor, Intel Core i9 computer. In Table \ref{tab:time}, we can see the times needed for Inexact-GenVarPro to run 26 iterations. As expected, the smaller the tolerance, the larger the time it takes.

\begin{table}[!ht]
    \centering
    \caption{Comparison of Inexact-GenVarPro with different tolerances in terms of CPU time for different initial guesses of $\by$.
    }
   \label{tab:time}
    \begin{tabular}{c|c|c|c|c}
        &  LSQR-s&  LSQR-ab & LSQR-lb & LSQR-b \\ \hline
      CPU time (seconds) - $\by^{(0)}=2$ & $1.34$ & $1.12$ & $0.78$ & $0.59$\\
      CPU time (seconds) - $\by^{(0)}=4$ & $1.21$ & $0.98$ & $0.69$ & $0.52$
    \end{tabular}
\end{table}
\newpage
To assess the quality of the reconstructed solutions given by LSQR-ab at iteration $k$, we compute the Relative Reconstruction Error (RRE) defined by
\begin{equation*}
{\rm RRE}(\bx^{(k)})=\frac{\|\bx^{(k)}-\bx_{\rm true}\|_2}{\|\bx_{\rm true}\|_2}.
\end{equation*}
Tables \ref{tab:RREy02} and \ref{tab:RREy04} compare these relative reconstruction errors and the corresponding values of $\by$ for the first seven iterations of GenVarPro and Inexact-GenVarPro using LSQR-ab, for $\by^{(0)}=2$ and $\by^{(0)}=4$, respectively. We also include the absolute values of the gradients, which decrease to 0. Figure \ref{fig:1d reconstructions} shows the vector $\bx_{\rm true}$ and the corresponding reconstruction vectors at iteration $k=7$.
\begin{figure}[h]
    \centering
   \includegraphics[width=0.45\textwidth]{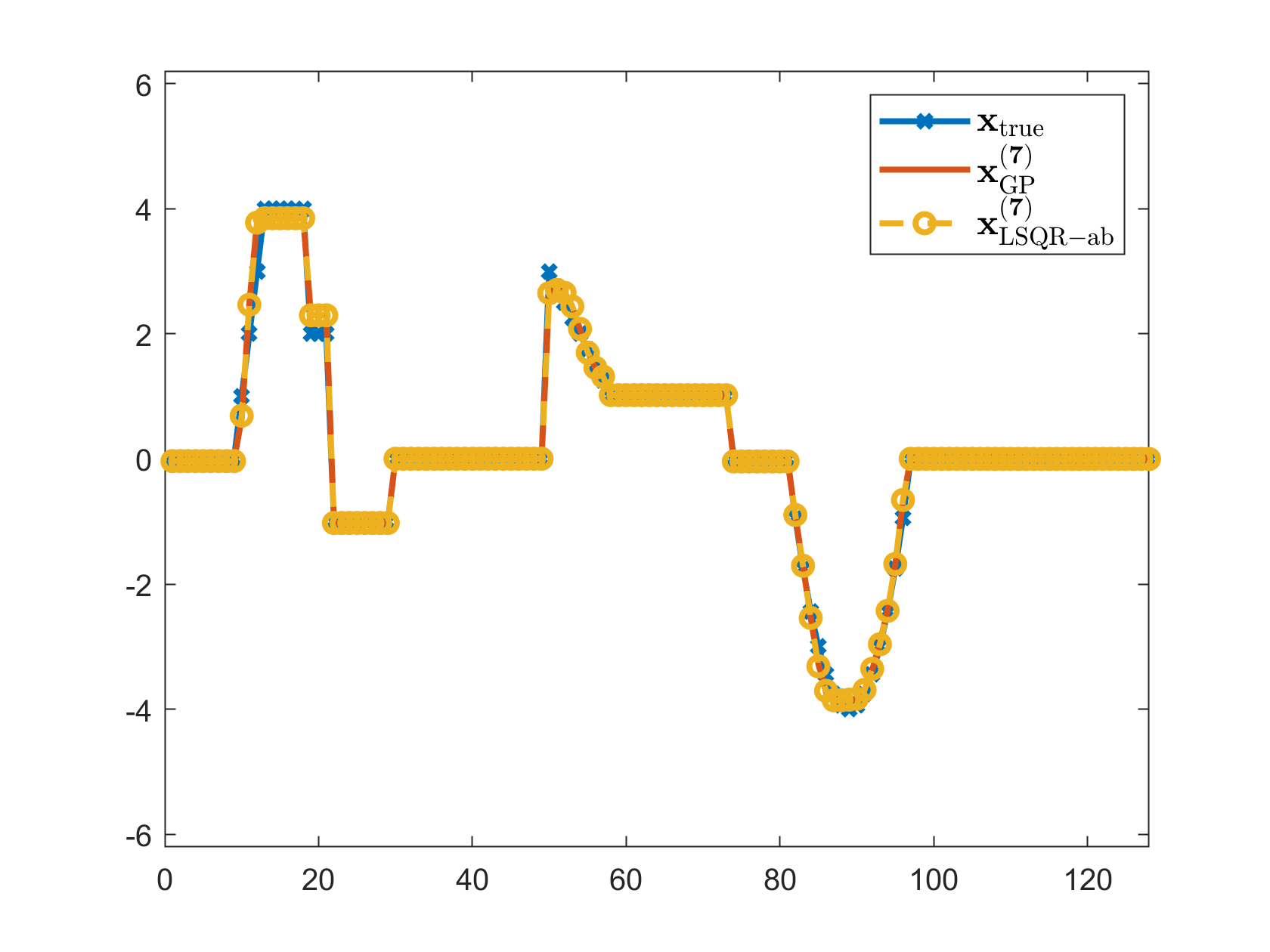}
    \includegraphics[width=0.45\textwidth]{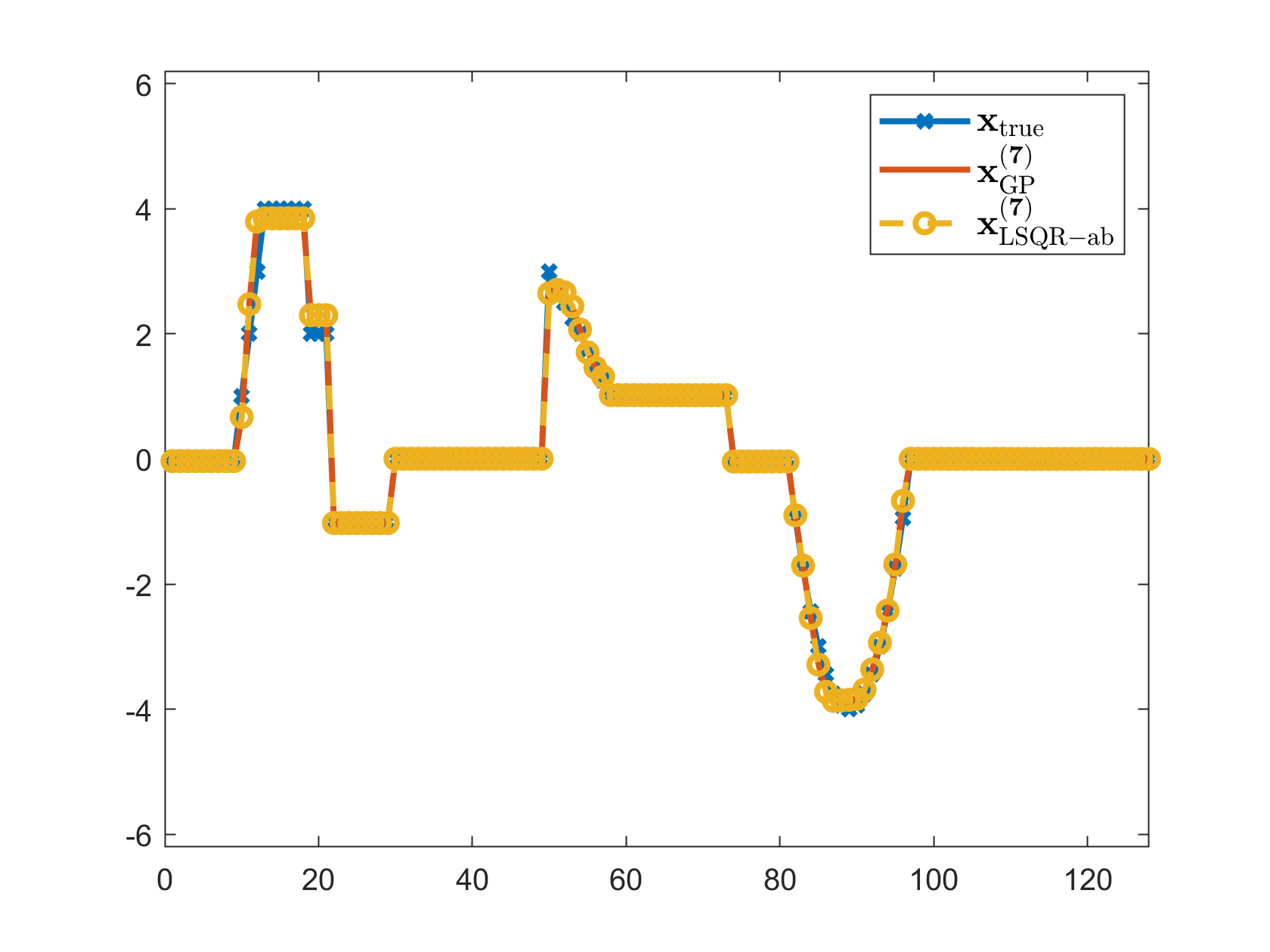}
    \caption{Reconstructions of the solution $\bx$ computed by GenVarPro and Inexact-GenVarPor using LSQR with an exponential decreasing tolerance \emph{(LSQR-ab)} for $\by^{(0)}=2$ (left) and $\by^{(0)}=4$ (right) after seven iterations.}
    \label{fig:1d reconstructions}
\end{figure}

\begin{table}[!ht]
    \centering
    \caption{Relative reconstruction errors of $\bx$, values of $\by$, and the absolute values of the gradient of the objective function obtained at each iteration of GenVarPro and Inexact-GenVarPro using LSQR with an exponential decreasing tolerance \emph{(LSQR-ab)} for $\by^{(0)}=2.$ The notation here is $\nabla f_{\rm GP}^{(k)}=\nabla f(\by_{\rm GP}^{(k)})$ and $\nabla f_{\rm LSQR-ab}^{(k)}=\nabla f(\by_{\rm LSQR-ab}^{(k)})$.
    }
   \label{tab:RREy02}
    \begin{tabular}{c|c|c|c|c|c|c}
        & RRE$(\bx_{\rm GP}^{(k)})$&  RRE$(\bx_{\rm LSQR-ab}^{(k)})$ & $\by_{\rm GP}^{(k)}$ & $\by_{\rm LSQR-ab}^{(k)}$ & $|\nabla f_{\rm GP}^{(k)}|$ & $|\nabla f_{\rm LSQR-ab}^{(k)}|$ \\ \hline
      1 & $0.2782$ & $0.1928$ & $2.6396$ & $2.5299$ & 1.6966 & 1.6966\\
      2 & $0.1087$ & $0.1154$ & $2.9635$ & $2.9185$ & 0.4793 & 0.5461\\
      3 & $0.0742$ & $0.0682$ & $2.9979$ & $2.9938$ & 0.0425 & 0.0653\\
      4 & $0.0779$ & $0.0649$ & $2.9983$ & $2.9972$ & 0.0005 & 0.0027\\
      5 & $0.0780$ & $0.0651$ & $2.9983$ & $2.9981$ & 0.0000 & 0.0002\\
      6 & $0.0780$ & $0.0651$ & $2.9983$ & $2.9981$ & 0.0000 & 0.0002\\
      7 & $0.0780$ & $0.0651$ & $2.9983$ & $2.9983$ & 0.0000 & 0.0000\\
    \end{tabular}
\end{table}
\begin{table}[h]
    \centering
    \caption{Relative reconstruction errors of $\bx$, values of $\by$, and the absolute values of the gradient of the objective function obtained at each iteration of GenVarPro and Inexact-GenVarPro using LSQR with an exponential decreasing tolerance \emph{(LSQR-ab)} for $\by^{(0)}=4.$ The notation here is $\nabla f_{\rm GP}^{(k)}=\nabla f(\by_{\rm GP}^{(k)})$ and $\nabla f_{\rm LSQR-ab}^{(k)}=\nabla f(\by_{\rm LSQR-ab}^{(k)})$.
    }

   \label{tab:RREy04}
    \begin{tabular}{c|c|c|c|c|c|c}
        & RRE$(\bx_{\rm GP}^{(k)})$&  RRE$(\bx_{\rm LSQR-ab}^{(k)})$ & $\by_{\rm GP}^{(k)}$ & $\by_{\rm LSQR-ab}^{(k)}$  & $|\nabla f_{\rm GP}^{(k)}|$ & $|\nabla f_{\rm LSQR-ab}^{(k)}|$ \\ \hline
      1 & $0.4329$ & $0.4029$ & $3.1676$ & $3.1375$ & 0.8990 & 0.8990 \\
      2 & $0.1125$ & $0.0557$ & $2.9983$ & $2.9717$ & 0.1983 & 0.1641 \\
      3 & $0.0780$ & $0.0615$ & $2.9983$ & $2.9962$ & 0.0001 & 0.0325 \\
      4 & $0.0780$ & $0.0753$ & $2.9983$ & $2.9981$ & 0.0000 & 0.0025\\
      5 & $0.0780$ & $0.0755$ & $2.9983$ & $2.9981$ & 0.0000 & 0.0002\\
      6 & $0.0780$ & $0.0766$ & $2.9983$ & $2.9983$ & 0.0000 & 0.0002\\
      7 & $0.0780$ & $0.0776$ & $2.9983$ & $2.9983$ & 0.0000 & 0.0000\\
    \end{tabular}
\end{table}

\section{Conclusions} \label{sec:conclusions}
We introduced a new variant of the variable projection method, which we call Inexact-GenVarPro, for solving large-scale separable nonlinear regularized inverse problems. In this method, we incorporated LSQR into GenVarPro (\cite{Espanol_2023}) to compute approximate solutions to the inner subproblem and used them to compute approximate Jacobians. We also proposed a stopping criterion for LSQR to ensure the convergence of our method. We presented a convergence analysis for Inexact-GenVarPro that holds not only for LSQR but also for any iterative method with the same proposed stopping criterion. Finally, we included numerical experiments where we applied Inexact-GenVarPro to solve a blind deconvolution problem. This numerical example supported our theoretical results.

In this paper, we assumed that the value of the regularization parameter $\lambda$ is known and fixed for all iterations. From our experience, having a $\lambda$ value fixed has shown overall good convergence rates~\cite{Espanol_2023}. However, it is difficult to determine a suitable value of $\lambda$ in advance. For instance, in \cite{chung2010efficient, Espanol_2023, gazzola2021regularization}, the values of $\lambda$ are updated at each iteration using some known heuristics for linear problems.

Future work includes exploring the efficacy of the proposed method with the use of iterative methods beyond LSQR (such as those introduced in~\cite{KilmerHansenEspanol, lampe2012large}), incorporating a selection method for the regularization parameter, and analyzing the addition of a regularization term for the nonlinear variables.

\section*{Acknowledgments} M.I. Espa\~nol was supported through a Karen Uhlenbeck EDGE Fellowship. Part of this work was done while G. Jeronimo was visiting Arizona State University in May 2023 and January-February 2024. We are grateful for the support and stimulating atmosphere provided by this institution.


\begin{thebibliography}{10}

\bibitem{chung2010efficient}
{\sc J.~Chung and J.~G. Nagy}, {\em An efficient iterative approach for
  large-scale separable nonlinear inverse problems}, SIAM Journal on Scientific
  Computing, 31 (2010), pp.~4654--4674.

\bibitem{dennisschnabel}
{\sc J.~E. {Dennis, Jr.} and R.~B. Schnabel}, {\em Numerical Methods for
  Unconstrained Optimization and Nonlinear Equations}, vol.~16 of Classics in
  Applied Mathematics, SIAM, Philadelphia, PA, USA, 1996.

\bibitem{devolder2014first}
{\sc O.~Devolder, F.~Glineur, and Y.~Nesterov}, {\em First-order methods of
  smooth convex optimization with inexact oracle}, Mathematical Programming,
  146 (2014), pp.~37--75.

\bibitem{dong2022numerical}
{\sc S.~Dong and J.~Yang}, {\em Numerical approximation of partial differential
  equations by a variable projection method with artificial neural networks},
  Computer Methods in Applied Mechanics and Engineering, 398 (2022), p.~115284.

\bibitem{Espanol_2023}
{\sc M.~I. Espa{\~n}ol and M.~Pasha}, {\em Variable projection methods for
  separable nonlinear inverse problems with general-form {T}ikhonov
  regularization}, Inverse Problems, 39 (2023), p.~084002.

\bibitem{gazzola2021regularization}
{\sc S.~Gazzola and M.~Sabate~Landman}, {\em Regularization by inexact krylov
  methods with applications to blind deblurring}, SIAM Journal on Matrix
  Analysis and Applications, 42 (2021), pp.~1528--1552.

\bibitem{golub2003separable}
{\sc G.~Golub and V.~Pereyra}, {\em Separable nonlinear least squares: the
  variable projection method and its applications}, Inverse problems, 19
  (2003), p.~R1.

\bibitem{golub1973differentiation}
{\sc G.~H. Golub and V.~Pereyra}, {\em The differentiation of pseudo-inverses
  and nonlinear least squares problems whose variables separate}, SIAM Journal
  on Numerical Analysis, 10 (1973), pp.~413--432.

\bibitem{higham2002accuracy}
{\sc N.~J. Higham}, {\em Accuracy and stability of numerical algorithms}, SIAM,
  2002.

\bibitem{kaufman1975variable}
{\sc L.~Kaufman}, {\em A variable projection method for solving separable
  nonlinear least squares problems}, BIT, 15 (1975), pp.~49--57.

\bibitem{KilmerHansenEspanol}
{\sc M.~E. Kilmer, P.~C. Hansen, and M.~I. $\mbox{Espa\~{n}ol}$}, {\em A
  projection-based approach to general-form {T}ikhonov regularization}, SIAM
  Journal on Scientific Computing, 29 (2007), pp.~315--330.

\bibitem{lampe2012large}
{\sc J.~Lampe, L.~Reichel, and H.~Voss}, {\em Large-scale tikhonov
  regularization via reduction by orthogonal projection}, Linear Algebra and
  its Applications, 436 (2012), pp.~2845--2865.

\bibitem{newman2021train}
{\sc E.~Newman, L.~Ruthotto, J.~Hart, and B.~van Bloemen~Waanders}, {\em Train
  like a (var) pro: Efficient training of neural networks with variable
  projection}, SIAM Journal on Mathematics of Data Science, 3 (2021),
  pp.~1041--1066.

\bibitem{nocedal2006numerical}
{\sc J.~Nocedal and S.~Wright}, {\em Numerical optimization}, Springer Science
  \& Business Media, 2006.

\bibitem{paige1982lsqr}
{\sc C.~C. Paige and M.~A. Saunders}, {\em {LSQR}: An algorithm for sparse
  linear equations and sparse least squares}, ACM Transactions on Mathematical
  Software (TOMS), 8 (1982), pp.~43--71.

\bibitem{pereyra2019imaging}
{\sc V.~Pereyra and G.~Scherer}, {\em Imaging applications with variable
  projections}, American Journal of Computational Mathematics, 9 (2019),
  pp.~261--281.

\bibitem{ruano1991new}
{\sc A.~E.~B. Ruano, D.~I. Jones, and P.~J. Fleming}, {\em A new formulation of
  the learning problem of a neural network controller}, in [1991] Proceedings
  of the 30th IEEE Conference on Decision and Control, IEEE, 1991,
  pp.~865--866.

\bibitem{ruhe1974algorithms}
{\sc A.~Ruhe and P.~A. Wedin}, {\em Algorithms for separable nonlinear least
  squares problems}, tech. report, Stanford Univ., Calif.(USA). Dept. of
  Computer Science, 1974.

\bibitem{song2020secant}
{\sc X.~Song, W.~Xu, K.~Hayami, and N.~Zheng}, {\em Secant variable projection
  method for solving nonnegative separable least squares problems}, Numerical
  Algorithms, 85 (2020), pp.~737--761.

\bibitem{stewart1977perturbation}
{\sc G.~W. Stewart}, {\em On the perturbation of pseudo-inverses, projections
  and linear least squares problems}, SIAM review, 19 (1977), pp.~634--662.

\bibitem{van2021variable}
{\sc T.~Van~Leeuwen and A.~Y. Aravkin}, {\em Variable projection for nonsmooth
  problems}, SIAM Journal on Scientific Computing, 43 (2021), pp.~S249--S268.

\end{thebibliography}

\end{document}